\newtheorem{thrm}{Theorem}[section]
\newtheorem*{thrm*}{Theorem}
\newtheorem{lemm}[thrm]{Lemma}
\newtheorem*{lemm*}{Lemma}
\newtheorem{prop}[thrm]{Proposition}
\newtheorem*{prop*}{Proposition}
\newtheorem*{corl*}{Corollary}
\newtheorem*{claim*}{Claim}
\theoremstyle{definition}
\newtheorem{defn}[thrm]{Definition}
\newtheorem*{defn*}{Definition}
\newtheorem*{assm*}{Assumption}
\newtheorem*{exmp*}{Example}
\newtheorem*{rmrk*}{Remark}
\numberwithin{equation}{section}
\renewcommand{\qed}{\hfill\mbox{\raggedright\rule{0.1in}{0.1in}}}
\begin{document}

\title{\textbf{On the time consistency of collective preferences}}

\author{\textsc{Luis A. Alcal\'{a}}\thanks{Departamento de Matem\'{a}tica \& Instituto de Matem\'{a}tica Aplicada-San Luis (IMASL), Universidad Nacional de San Luis-CONICET, San Luis, ARGENTINA. Email: lalcala@unsl.edu.ar}
}

\date{July 15, 2018}

\maketitle

\begin{abstract}
A dynamic model of collective consumption and saving decisions made by a finite number of agents with constant but different discount rates is developed. Collective utility is a weighted sum of individual utilities with time-varying utility weights. Under standard separability assumptions, it is shown that collective preferences may be nonstationary but still satisfy time consistency. The assumption of time-varying weights is key to balance the need of the group for a changing distribution of consumption among its members over time with their tolerance for consumption fluctuations.
\medskip

\noindent\textbf{Key words}: Dynamic collective choice; Time consistency; Recursive preferences; Pareto optimality
\medskip

\noindent\textbf{JEL classification}: D71, D90, D61, C61
\medskip

\end{abstract}

\section{Introduction}
\label{sec:intro}

This paper develops a framework for constructing collective preferences from the Pareto optimal consumption and saving decisions of time consistent individuals with heterogeneous discount factors. One of the main issues addressed in this framework is whether the aggregation of heterogeneous time preferences necessarily implies time inconsistency. This aggregation problem has been long recognized, but it still poses challenges to researchers. For instance, \citet{dumas89} shows that difficulties in modelling aggregate behavior in the presence of heterogeneity may even arise in a two-agent setting with homogeneous discount factors. A more general approach has been taken by \citet{gollierzeckh05} in the analysis of a pure  exchange economy composed of a finite group of agents with different discount factors. They show that Paretian aggregation of individual preferences does not generally yield a constant aggregate discount rate. In fact, optimality implies that the individual shares of a common consumption stream change over time, which allows to define an aggregate discount rate. Collective impatience is a weighted average of individual discount rates, with weights that are proportional to the agents' \emph{tolerance for consumption fluctuations} (a concept similar to risk tolerance when agents have preferences over stochastic consumption streams). 

The conditions for time consistent aggregation of individual preferences are very strong, as shown by \citet{zuber11}. Aggregate preferences are stationary and Pareto optimal if and only if individual and collective preferences are additively separable with exponential discounting, and all agents have the same discount rate. There are strong implications for models of collective choice as well. For instance,  \citet{jacksonyariv14,jacksonyariv15} show that when preferences of heterogeneous agents evaluating a stream of common consumption are aggregated via a collective utility function that satisfies Pareto optimality, collective preferences must be dictatorial or time inconsistent. On the other hand, if aggregation rules are Paretian and nondictatorial, they exhibit a specific form of time inconsistency, known as present-bias or decreasing impatience. 

As is common in social choice theory, previous results are based on a fundamental assumption: aggregate utility is a weighted sum of individual utilities with weights that are \emph{constant over time}. This paper shows that relaxing this assumption makes time consistent aggregation of individuals with heterogeneous discount factors possible. To do this, the problem of constructing Pareto optimal allocations is extended to allow for time-varying weights, based on the framework developed by \citet{lucasstokey84}. Utility weights then become a state variable in the dynamic program. Important extensions introduced by \citet{danalevan90,danalevan91}, such as allowing for unbounded returns and more general specifications of the discounting process, are also included in the analysis. 

A sharp insight provided by \citet{halevy15} will also be used to characterize the collective preferences obtained in this work. Specifically, assume that preferences are represented by a sequence of preference relations $\{\succsim_t\}_{t=0}^\infty$ defined over temporal payments $(c,t)$, where $c$ is a real number and $t$ represents time. For every $t,t' \geq 0$, $b,c \in \mathbb{R}$, and $\tau,\tau' \geq 0$, three distinct properties are defined as follows:
\begin{enumerate}[label=(H\arabic*),leftmargin=*,itemsep=0pt,topsep=1pt]
\item\label{axm:station} Stationarity: $(b,t+\tau) \sim_t (c,t+\tau') \iff (b,t'+\tau) \sim_t (c,t'+\tau')$,
\item\label{axm:timeinv} Time invariance: $\ (b,t+\tau) \sim_t (c,t+\tau') \iff (b,t'+\tau) \sim_{t'} (c,t'+\tau')$,
\item\label{axm:timecon} Time consistency: $(b,t+\tau) \sim_t (c,t+\tau') \iff (b,t+\tau) \sim_{t'} (c,t+\tau')$.
\end{enumerate}
It is shown in \cite{halevy15} that any two properties imply the third. Therefore, if preferences are time invariant, they satisfy time consistency if and only if they are stationary. But the distinction between stationarity and time consistency as two separate phenomena has been often overlooked in the literature. 

Based on this fundamental distinction between time consistency, time invariance, and stationarity, \citet{millnerheal18} have recently shown that time-additive collective preferences can be both nondictatorial and time consistent. If agents have heterogeneous discount factors and utility weights are a function of time, they find a specific rule that utility weights must obey for collective preferences to be time consistent. In the present paper, we show that time consistency is not only \emph{possible} in a dynamic economy with consumption and savings, but also that the specified functional form for the utility weights can be the outcome of agents' optimizing behavior and Pareto efficient sharing rules.    

To work with an axiomatic approach and develop conditions for consistent aggregation, it is necessary to impose certain separation properties on collective preferences. In this respect, the close relation between the concepts of risk tolerance and \emph{tolerance for consumption fluctuations} is clearly beneficial. The classical results from optimal risk sharing and syndicate theory in a static equilibrium by \citet{wilson68} and \citet{amerstoeck83} are used to formulate  aggregate utility as the product of two factors: a discounting function depending only on time and the utility derived from current consumption. Along these lines, related aggregation results in pure exchange Arrow-Debreu economies have been obtained by \citet{schmedders07}, assuming risk aversion and complete markets, and \citet{wakai07}, under homogeneous ambiguity. They show that if individual utility functions satisfy the property of linear risk tolerance with identical marginal risk tolerance, a two-fund separation property holds for these economies. 

A similar approach as the one presented here to determine efficient intertemporal allocations in a multi-agent setting has been used in \citet{dumasuppalwang00}. They assume a stochastic endowment economy in continuous-time. Although their work does not touch on the issue of time consistency, an interesting aspect is that individuals have recursive utilities that are not time-additive, allowing to distinguish risk aversion from  intertemporal substitution. They also obtain an aggregation result for the case that each agent has Kreps-Porteus preferences with identical risk aversion.

The recent literature in time preferences and intertemporal choice is too extensive to be reviewed here in any detail.\footnote{See the comprehensive survey by \citet{fredloewodon02} and references therein. A detailed historical account can also be found in the work of \citet{palhuerta03}.} Besides the studies that have already been cited, recent works more closely related to this paper include \citet{banermitra07} and \citet{banerdubey13} on impatience as a general property of Paretian social welfare functions. Relations between stationarity, decreasing impatience, and time consistency have been studied by \citet{prelec04}, \cite{bleichetal09}, and \citet{rohde09}. Characterization and welfare properties of competitive equilibria with time-dependent preferences are given in \citet{herinrohde06}, \citet{luttmariott06}, and \citet{dziewulski15}.\footnote{Other recent contributions on related topics include: \citet{drugeon00}, \citet{das03}, \citet{hayashi03}, \citet{benoitok07}, \citet{okmasatl07}, \citet{mutlu13}, \citet{depazetal13}, and \citet{montolea14}.}

The rest of the paper is organized as follows. The next section develops the model and provides the main result for aggregation. Section \ref{sec:collective} gives a characterization of collective preferences and optimal sharing rules. Additional properties are discussed in Section \ref{sec:discussion}, where the result on time consistent aggregation is proved. Concluding remarks are offered in Section \ref{sec:conclude}. Technical proofs are relegated to the Appendix.

\section{Setup and Aggregation}
\label{sec:setup}

\subsection{Preliminaries}
\label{sec:prelims}

Let $\mathbb{R}$ denote the set of real numbers. Then, $\mathbb{R}_+\!:=[0,+\infty)$ and $\underline{\mathbb{R}}:= \mathbb{R} \cup \{-\infty\}$. The set of all nonnegative integers is represented by $\mathbb{Z}_+$. Vectors in the $n$-dimensional Euclidean space $\mathbb{R}_+^n$ are denoted by $x$. For infinite sequences of these vectors, the notation used is $\mathbf{x}:=(x_0,x_1,\ldots)$. The space of real-valued sequences $\ell^\infty$ is endowed with the sup norm, $\Vert \mathbf{x}\Vert_\infty:=\sup_t|x_t|$. If $n > 1$ and each component is in $\ell^\infty$, the corresponding product space will be denoted as $(\ell^\infty)^n$ (for ease of notation, the infinity superscript will be dropped if no ambiguity arises). 

The following conventions, which are commonly used in convex optimization problems, are also adopted:
\begin{enumerate}[label=(C\arabic*),leftmargin=*,itemsep=0pt,topsep=1pt]
\item $0\cdot(\pm\infty)=(\pm\infty)\cdot 0 = 0$;
\item $a\cdot(\pm\infty)=(\pm\infty)\cdot a = \pm\infty$, for all $a \in (0,+\infty]$;
\item $a\cdot(\pm\infty)=(\pm\infty)\cdot a = \mp\infty$, for all $a \in [-\infty,0)$.
\end{enumerate}

\subsection{The model}
\label{sec:model}

For some integer $n \geq 2$, a set $N=\{1,\ldots,n\}$ of infinitely lived agents make collective consumption and saving decisions. Time is discrete and denoted by $t=0,1,2,\ldots$ There is a single consumption good which can either be consumed or transformed one-to-one into capital available next period. Production occurs in a single unit where agents pool their capital holdings to obtain a certain deterministic return in terms of the consumption good, which is called ``consumption'' for simplicity. But the framework is quite general and admits various interpretations. 

Time preferences are represented by an additively separable intertemporal utility function with instantaneous utility $u$,  and constant geometric discounting. Instantaneous utility is  assumed to be common to all agents, but discount factors $\delta^i$ are heterogeneous and satisfy
\begin{equation*}
1 > \delta^1 > \delta^2 \geq \cdots \geq \delta^n > 0,
\end{equation*}
so there are at least two separate groups of agents in terms of their impatience. 

Denote by $x_t^i \in \mathbb{R}_+$ the quantity which agent $i$ consumes at date $t$. Let $\mathbf{x}^i=(x_0^i,x_1^i,\ldots)$ denote an infinite sequence of consumptions for agent $i$. Each agent $i$ assigns an utility value to a consumption path $\mathbf{x}^i$ in the space $\ell_+$ of non-negative bounded sequences  
\begin{align*}
w_0^i(\mathbf{x}^i):=\sum_{t=0}^\infty (\delta^i)^t u_i(x_t^i), & & i=1,\ldots,n. 
\end{align*}
The following assumptions on the primitives of the problem, preferences and technology, will be used throughout the paper.
\begin{enumerate}[label=(U\arabic*),leftmargin=*,itemsep=0pt,topsep=1pt]
\item\label{asm:u0} For each $i \in N$, the instantaneous utility function $u_i: \mathbb{R}_+\!\to \underline{\mathbb{R}}$ is continuous, strictly increasing, and strictly concave. At the origin, either $u_i(0)=0$ or $u_i(0)=-\infty$.
\item\label{asm:uprime} For each $i \in N$, the instantaneous utility function $u_i$ is twice continuously differentiable on $\mathbb{R}_+ \backslash \{0\}$. If $u_i(0)=0$, then $\lim_{x \to 0^+} u_i'(x)=+\infty$.
\end{enumerate}
Technology is given by a standard neoclassical production function that transforms aggregate capital into output. 
\begin{enumerate}[label=(T\arabic*),leftmargin=*,itemsep=0pt,topsep=1pt]
\item\label{asm:f0} The production function $f:\mathbb{R}_+\!\to \mathbb{R}_+$ is continuous, strictly increasing, strictly concave, and $f(0)=0$.
\item\label{asm:fprime} The production function $f$ is differentiable on $\mathbb{R}_+\!\backslash\{0\}$, $\lim_{k \to 0^+} f'(k) > (1/\delta^n)$, and $\lim_{k \to +\infty} f'(k)=0$.
\end{enumerate}

Note that \ref{asm:f0}--\ref{asm:fprime} imply the existence of a maximum level of sustainable capital $k_m > 0$, so there is no loss of generality in restricting the state space to the closed interval $K:=[0,k_m]$. This implies that the set of feasible consumption (and utilities) is bounded above. Moreover, \ref{asm:uprime}--\ref{asm:fprime} guarantee that equilibrium paths are interior.

\begin{defn}
For each $t$, an \emph{allocation} is a pair $\left(x_t,k_{t+1}\right)$, consisting on a consumption profile $x_t:=(x_t^1,\ldots,x_t^n) \in \mathbb{R}_+^n$ and aggregate capital for the next period $k_{t+1} \in K$. Given $k_0 \geq 0$, a capital path $\mathbf{k}:=(k_0,k_1,\ldots)$ is said to be \emph{feasible} if each element of the sequence belongs to the set
\begin{align*}
\Gamma(k_t):=\{k_{t+1} \in K: 0 \leq k_{t+1} \leq f(k_t)\},
\end{align*}
which will be called the \emph{feasible correspondence} at time $t$. 
\end{defn}

\begin{defn}
The set of all \emph{feasible capital paths} from $k_0$ is defined as
\begin{align*}
\Pi(k_0):=\left\{\mathbf{k} \in \ell_+: k_{t+1} \in \Gamma(k_t),\ t=0,1,\ldots;\ k_0\ \text{given}\right\}.
\end{align*}
\end{defn}

\begin{defn}
Let $\mathbf{x}:=(x_0,x_1,\ldots)$ denote a consumption path. The set of all \emph{feasible consumption paths} from $k_0$ is defined as
\begin{align*}
\Omega(k_0):=\left\{\mathbf{x} \in \ell_+^n: 0 \leq \textstyle\sum_i x_t^i \leq f(k_t)\!-\!k_{t+1},\ t=0,1,\ldots;\ 
\text{for some}\ \mathbf{k} \in \Pi(k_0)\right\}.
\end{align*}
\end{defn}

\subsection{Optimal allocations with heterogeneous discounting}
\label{sec:pareto}

The methodology for constructing Pareto optimal allocations   is mainly based on the works of \cite{lucasstokey84} and  \cite{danalevan90,danalevan91}. Although an important difference is that individual preferences in the current framework are a primitive of the problem, instead of being derived from a time aggregator. 

A fundamental assumption in this setup is that the Pareto or utility weights for each agent are allowed to vary over time. For each period $t$, the set of weights is given by a vector $\theta_t:=(\theta_t^1,\ldots,\theta_t^n)$ in the $n$-dimensional simplex, denoted by $\Theta^n$ and defined as
\begin{equation}
\label{eq:simplex}
\Theta^n:=\left\{\theta \in \mathbb{R}_+^n : \theta^i \geq 0,\ i=1,\ldots,n;\ \text{and}\ \textstyle\sum_i\theta^i =1\right\}.
\end{equation}

The utility possibility set $\mathcal{U}(k)$ contains all the possible combinations of utility available to the $n$ agents when the initial capital stock is $k \geq 0$,
\begin{align*}
\mathcal{U}(k):=\left\{z \in \underline{\mathbb{R}}^n : z^i=w^i(\mathbf{x}^i),\ i=1,\ldots,n,\ \text{for some}\ \mathbf{x} \in \Omega(k)\right\}.
\end{align*}
And the value function of the Pareto problem is defined as the support function of the utility possibility set, that is,
\begin{equation*}
V(k,\theta):=\sup_{z \in \mathcal{U}(k)}\ \sum_{i=1}^n \theta^i z^i.
\end{equation*}

A characterization of the value function is given in the following proposition. For detailed proofs, see 
\citet[Theorem 3]{lucasstokey84} and Section 4 in \citet{danalevan90}. Also, \cite{duranlevan00} contains results that are particularly useful for the case where returns are unbounded. 

\begin{prop}
\label{prp:Vfunct}
The value function $V: K \times \Theta^n \to \underline{\mathbb{R}}$ satisfies the following properties: 
\begin{enumerate}[label=\emph{(\alph*)},align=left,leftmargin=*,
itemsep=0pt,topsep=1pt]
\item $V$ is continuous on $K\!\times\Theta^n$;  
\item For each $\theta \in \Theta^n$, $V(\cdot,\theta): K \to \mathbb{R}$ is strictly increasing and strictly concave;
\item For each $k \in K$, $V(k,\cdot): \Theta^n \to \mathbb{R}$ is homogeneous of degree one and strictly convex;
\item $V$ is continuously differentiable in the interior of $K\!\times\Theta^n$.
\end{enumerate}
\end{prop}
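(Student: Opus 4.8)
The plan is to pass from the infinite-horizon Pareto problem to a recursive formulation and then transfer the asserted properties from the per-period data $(u,f,\delta^1,\dots,\delta^n)$ to $V$ by a dynamic-programming fixed-point argument. A first step I would record is existence of optimal allocations: by \ref{asm:f0}--\ref{asm:fprime} every feasible path stays in the compact interval $K=[0,k_m]$, so $\Omega(k)$ is compact in the product topology (Tychonoff), and since $u$ is continuous and bounded above there, each $w^i$ is upper semicontinuous in that topology (Fatou, using the tail domination $(\delta^i)^tu\le(\delta^i)^t\bar u$); hence $\bds{\hat x}\mapsto\sum_i\theta^iw^i(\bds x^i)$ attains its maximum on $\Omega(k)$ and the supremum defining $V$ is in fact a maximum. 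The structural fact that makes the recursion work is that, with exponential discounting, the \emph{continuation} weights evolve deterministically: extending $V(k,\cdot)$ to the positive cone by the same sup-formula and writing $\delta\!\bullet\!\theta:=(\delta^1\theta^1,\dots,\delta^n\theta^n)$, one gets the Bellman equation
\begin{equation*}
V(k,\theta)=\max_{\substack{k'\in\Gamma(k)\\\sum_i x^i\le f(k)-k'}}\Big\{\textstyle\sum_i\theta^iu(x^i)+V(k',\delta\!\bullet\!\theta)\Big\},
\end{equation*}
equivalently $V(k',\delta\!\bullet\!\theta)=\sigma(\theta)V(k',\theta')$ with $\sigma(\theta):=\sum_j\delta^j\theta^j$ and $\theta'=\delta\!\bullet\!\theta/\sigma(\theta)\in\Theta^n$ the deterministically updated weight vector. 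When returns are bounded this operator is a sup-norm contraction; when $u(0)=-\infty$ I would instead obtain $V$ as the monotone limit of the finite-horizon value functions, or use a weighted-norm contraction, following \citet{duranlevan00} and \citet{danalevan90,danalevan91}.

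The ``linear'' half of the statement is then quick. Homogeneity of degree one in $\theta$ and convexity of $V(k,\cdot)$ are immediate from the representation $V(k,\theta)=\sup_{z\in\mathcal U(k)}\langle\theta,z\rangle$ as the support function of the utility possibility set; and the recursion preserves convexity since $V(k',\delta\!\bullet\!\theta)$ is a convex function of $\theta$ precomposed with the linear map $\theta\mapsto\delta\!\bullet\!\theta$. Monotonicity and concavity in $k$ come from the geometry of the feasible set: $k_0<k_1$ gives $\Omega(k_0)\subsetneq\Omega(k_1)$ with slack to raise consumption, so strict monotonicity of $u$ yields $V(k_0,\theta)<V(k_1,\theta)$; and for $k_\alpha=\alpha k_0+(1-\alpha)k_1$ the $\alpha$-combination of optimal plans from $k_0$ and $k_1$ is feasible from $k_\alpha$ by concavity of $f$, so concavity of $u$ gives concavity of $V(\cdot,\theta)$. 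Continuity on the interior of $K\times\Theta^n$ follows from concavity in $k$ and convexity (and finiteness) in $\theta$; continuity up to the boundary I would get from Berge's maximum theorem applied to the Bellman equation (the constraint correspondence is nonempty, compact-valued and continuous), treating the corner $k=0$ and the faces of $\Theta^n$ with \ref{asm:u0}.

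The substance is in the three strict/differentiable claims, whose common engine is \emph{interiority of optimal paths}. I would establish this first: by the Inada-type conditions in \ref{asm:uprime} and \ref{asm:fprime}, any optimal allocation has $x_t^i>0$ whenever $\theta^i>0$ and $k_t\in(0,k_m)$ for $t\ge1$, so the first-order conditions hold with a strictly positive common resource multiplier $p_t$, namely $\theta^i(\delta^i)^tu'(x_t^i)=p_t$ for every such $i$. Granted interiority: strict concavity in $k$ holds because strict concavity of $f$ makes the time-$0$ feasibility inequality slack along the convex combination used above, leaving room for a strict improvement; strict convexity in $\theta$ holds because, when all $\theta^i>0$, the objective $\bds{\hat x}\mapsto\sum_i\theta^iw^i(\bds x^i)$ is \emph{strictly} concave, so the optimal allocation and hence the optimal utility vector $z^*(\theta)$ are unique, and the first-order conditions make $\theta\mapsto z^*(\theta)$ injective (if $z^*(\theta_0)=z^*(\theta_1)$ the two optimal allocations coincide, and then $\theta_0^i/\theta_1^i=p_t^0/p_t^1$ is independent of $i$, forcing $\theta_0=\theta_1$ on the simplex), which precludes $V(k,\cdot)$ from being affine on any segment; the boundary faces of $\Theta^n$ are handled by a limiting argument. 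Finally, differentiability on the interior follows from the Benveniste--Scheinkman envelope theorem for $\partial V/\partial k=p_0f'(k)$ (using interiority), and for $\partial V/\partial\theta$ from the standard fact that a finite convex function whose support-function representation has a unique maximizer is differentiable, with $\nabla_\theta V(k,\theta)=z^*(\theta)$; continuity of $\nabla V$ then comes from continuity of the optimal policy (Berge plus uniqueness).

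The main obstacle is not any single item but the bookkeeping required when $u(0)=-\infty$: $V$ can equal $-\infty$ on parts of $\partial(K\times\Theta^n)$, the Bellman operator is no longer a sup-norm contraction, and the envelope, uniqueness and first-order arguments must be confined to the region where optimal paths are interior. Delimiting that region via \ref{asm:uprime}--\ref{asm:fprime}, and checking that the finite-horizon approximants inherit concavity, convexity and homogeneity and pass these to the limit, is where the care concentrates --- which is exactly why the detailed proofs are delegated to \citet{lucasstokey84}, \citet{danalevan90,danalevan91} and \citet{duranlevan00}.
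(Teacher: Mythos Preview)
The paper does not prove this proposition: it states the result and refers the reader to \cite{danalevan90} and \cite{lucasstokey84} (with \cite{duranlevan00} mentioned in a footnote for the unbounded case). Your sketch is precisely the standard route taken in those references---support-function arguments for homogeneity and convexity in $\theta$, feasible-set nesting and concavity of $f$ for monotonicity and concavity in $k$, interiority via Inada conditions to upgrade to strict versions and to invoke Benveniste--Scheinkman for differentiability---so there is nothing to contrast; you have reconstructed what the paper delegates.
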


The next result, which completely characterizes Pareto optimal allocations, summarizes Propositions 3.1--3.2 and Theorem 3.1 in \cite{danalevan90}, and is stated below for convenience.

\begin{prop}
\label{prp:Paretochar}
Under assumptions \ref{asm:u0}--\ref{asm:uprime} on preferences and \ref{asm:f0}--\ref{asm:fprime} on technology, a feasible consumption path $\mathbf{x} \in \Omega(k_0)$ is Pareto optimal if and only if there exist sequences $\mathbf{k}$ and $\mathbf{z}$ such that $k_{t+1} \in \Gamma(k_t)$ and $z_t \in \tilde{\mathcal{U}}(k_t)$ for all $t$, where $\tilde{\mathcal{U}}$ is the frontier of $\mathcal{U}$, and $z_t^i = u_i(x_t^i)+ \delta^i z_{t+1}^i$ for all $i \in N$ and for all $t$. Moreover, $\mathbf{k}$ and $\mathbf{z}$ are uniquely determined.\footnote{If $S$ is a closed set, and $\overline{S}$ denotes its closure, the \emph{topological frontier} $\tilde{S}$ of $S$ is defined as $\overline{S} \backslash S$.} 
\end{prop}

In light of the above, the problem of finding optimal allocations can be formulated as follows. For each $(k,\theta)$ in $K \times \Theta^n$, a nonnegative consumption profile $x:=(x^1,\ldots,x^n)$, next-period capital $y$, continuation utilities $z$, and next-period Pareto weights $\tau$ are chosen to solve the following program:
\begin{align}
\label{eq:PP}
\adjustlimits\sup_{x \in X,\,y \geq 0,\,z\in \mathcal{U}\ }\inf_{\tau \in \Theta^n}\ &\sum_{i=1}^n\theta^i\left[u_i(x^i) + \delta^i z^i\right],\tag{PP}\\[5pt]
\text{s.t. } \quad & \sum_{i=1}^n x^i + y \leq f(k),\nonumber\\
&\sum_{i=1}^n \tau^i z^i - V(y,\tau) \leq 0.\nonumber
\end{align}

In what follows, the following notation for \emph{aggregate variables} is adopted:
\begin{align*}
\hat{x}:=\sum_{i=1}^n x^i, \quad \hat{x}_t:= \sum_{i=1}^n x_t^i, \quad \text{and} \quad \mathbf{\hat{x}}:=\sum_{i=1}^n \mathbf{x}^i,
\end{align*}
with $\hat{X} \subset \mathbb{R}_+$ denoting the space of feasible aggregate consumption levels. It is also convenient to define the set of all \emph{feasible aggregate consumption paths} from $k_0$, separately from $\Omega$, which is denoted as $\Psi(k_0)$ and defined by
\begin{align*}
\Psi(k_0):=\left\{\mathbf{\hat{x}} \in \ell_+: 0 \leq \hat{x}_t \leq f(k_t)-k_{t+1},\ t=0,1,\ldots;\  \text{for some}\  \mathbf{k} \in \Pi(k_0)\right\}.
\end{align*}

Based on the formulation given in \eqref{eq:PP}, the following theorem lays the foundations for the preference aggregation process and subsequent results. The proof can be found in Appendix \ref{app:pfRFBellman}.

\begin{thrm}
\label{thm:RFBellman}
Suppose that $(k,\theta) \gg 0$. Let $s:\hat{X} \times \Theta^n \to X$ be an optimal  consumption profile for \eqref{eq:PP}, as a function of aggregate consumption $\hat{x}$ and the Pareto weights $\theta$, such that $0 < \hat{x} \leq f(k)-y$, for some $y \in \Gamma(k)\backslash\{0\}$, and $\hat{x}:=\sum_i s^i(\hat{x},\theta)$. Let $F:\Theta^n \to \Theta^n$ be a transition map for the Pareto weights associated to the optimal choice of $\tau$ in \eqref{eq:PP}. Then, there exists an aggregate periodic utility function $U:\hat{X}\!\times \Theta^n \to \underline{\mathbb{R}}$, defined as
\begin{equation}
\label{eq:Udef}
U(\hat{x},\theta):=\sum_{i=1}^n \theta^i u_i\left(s^i(\hat{x},\theta)\right),
\end{equation}
and an aggregate discount rate $\mu:\Theta^n \to \mathbb{R}_+$, given by
\begin{equation}
\label{eq:mudef}
\mu(\theta):=\sum_{i=1}^n \theta^i \delta^i,
\end{equation}
such that the value of \eqref{eq:PP} satisfies the following functional equation
\begin{equation}
\label{eq:RFBellman}
V(k,\theta)=\sup_{y \in \Gamma(k)} \Big[ U(f(k)-y,\theta)+\mu(\theta)V(y,F(\theta))\Big],
\end{equation}
for all $(k,\theta)$ in the interior of $K \times \Theta^n$.
\end{thrm}

A collective periodic utility function  $U(\hat{x}_t,\theta_t)$ is constructed by aggregating individual \emph{indirect utility functions}, which in turn are the values of the following auxiliary program to \eqref{eq:PP},
\begin{align}
\label{eq:PPx}
\sup_{x \in X}\ \left\{\textstyle\sum_i\theta^i\,u_i(x^i): \textstyle\sum_i x^i \leq \hat{x}\right\},\tag{PPx}
\end{align}
where $\hat{x}$ satisfies all the hypotheses of Theorem \ref{thm:RFBellman}. This is derived from a standard result, \emph{Fisher's separation theorem}, which implies that an optimal consumption profile in \eqref{eq:PP} can be obtained separately from the investment decision, i.e., the choice of $y$. A key assumption for the separation result to hold in this case is the additivity of intertemporal preferences.

After solving \eqref{eq:PPx}, it is easy to see that   aggregate intertemporal preferences have a recursive  representation. For concreteness, let $\lambda$ and $\mu$ be the Lagrange multipliers associated with the inequality restrictions in \eqref{eq:PP}, and consider the first-order conditions of this problem.\footnote{The first-order conditions for an interior optimum are obtained by partial differentiation are
\begin{align*}
0&=\theta_t^i u_i'(x_t^i)-\lambda_t, & t=0,1,\ldots,\\
0&=\theta_t^i\delta^i-\mu_t\theta_{t+1}^i, & t=0,1,\ldots,\\
0&=z_t^i - V_{\theta^i}(k_{t+1},\theta_{t+1}), & t=0,1,\ldots,\\
0&=-\lambda_t + \mu_t V_k(k_{t+1},\theta_{t+1}), & t=0,1,\ldots,
\end{align*}
together with transversality conditions for the state variables.} The conditions with respect to $\theta_{t+1}^i$ and $z_{t+1}^i$, respectively, imply that
\begin{align}
\label{eq:foctautp}
z_t^i &= u_i(x_t^i) + \delta^i z_{t+1}^i,  & t=0,1,\ldots,\\
\label{eq:focztp}
\theta_t^i \delta^i &= \mu_t \theta_{t+1}^i, & t=0,1,\ldots.
\end{align} 
Combining \eqref{eq:foctautp} with \eqref{eq:focztp} yields
\begin{align*}
\theta_t^i z_t^i &= \theta_t^i u_i(x_t^i) + \mu_t \theta_{t+1}^i z_{t+1}^i, & & t=0,1,\ldots.
\end{align*}
Therefore, adding up both sides of the equality over $i \in N$ in the expression above,
\begin{align}
\label{eq:Wtrec}
W_t = U(\hat{x}_t,\theta_t)+\mu(\theta_t) W_{t+1}, & & t=0,1,\ldots,
\end{align}
where $W_t:=\sum_i \theta_t^i w_t^i$ is an aggregate utility index, $U(\hat{x}_t,\theta_t)$ is the value of \eqref{eq:PPx}, and $\mu(\theta_t)$, which is defined by \eqref{eq:mudef}, is obtained from adding both sides of \eqref{eq:focztp} over $i \in N$.

Now it is apparent that, taken as a function of $\theta_t$, this multiplier can be interpreted as an aggregate \emph{factor of time preference} and is related to the one-period discount rate.\footnote{The \emph{instantaneous rate of time preference} is defined as $\rho(\cdot):=1/\mu(\cdot) -1$.} In fact, it is clear from \eqref{eq:mudef}, that this aggregate factor of time preference is the weighted arithmetic mean of individual discount factors, with weights given by $\theta_t \in \Theta^n$. Moreover, the evolution of $\mu(\theta_t)$ over time is entirely explained by the evolution of the utility weights. Therefore, \eqref{eq:Wtrec} defines recursively an intertemporal utility function that assigns to each infinite stream of aggregate consumption and Pareto weights $(\mathbf{\hat{x}},\mathbf{\theta})$ in $\ell_+\!\times (\Theta^n)^\infty$ an utility measure for the collective formed by the members of $N$. This representation of collective preferences also defines an aggregate \emph{discount factor}
\begin{align*}
\beta_t:= \prod_{s=0}^{t-1} \mu(\theta_s), & & t=0,1,\ldots
\end{align*}

It may seem odd to have preferences that depend, directly or indirectly, on a vector of utility weights $\theta_t$. \citet{lucasstokey84} give Lagrange multipliers $\lambda$ and $\mu$ an interpretation of competitive equilibrium prices as functions of the state of the system $(k,\theta)$. Let $r(k,\theta)$ be the price of the consumption good in the following period in terms of the current consumption good. In a stationary equilibrium, $\mu$ is considered an equilibrium interest factor which satisfies $\mu = 1/(1+r)$. Unfortunately, this interpretation relies on the assumption of \emph{increasing marginal impatience}, which has been criticized on empirical and theoretical grounds. In the current   setup, taking $\mu$ as a subjective aggregate discount rate implies decreasing marginal impatience, as will be shown in Section \ref{sec:discussion}. 

Alternatively, the recursive representation \eqref{eq:Wtrec} can be rationalized using the concept of \emph{variational utility} introduced by \citet{geoffard96}. The variational utility of a consumption path is defined as the minimum value of an additive criterion, taken over all possible future discount rates. Note the close relation between this idea and the formulation of the Pareto problem given in \eqref{eq:supdec1} and the auxiliary program \eqref{eq:supdec2}. Variational utilities include time additive and recursive preferences as special cases. Henceforth, the model developed in this paper can be understood in the context of a broader class of deterministic models where  optimal allocations can be characterized as the solution of a simple dynamic programming problem. The value function represents aggregate utility and belongs to the class of variational utilities. A stochastic formulation of this approach can be found in \citet{dumasuppalwang00}.

From \eqref{eq:mudef} and \eqref{eq:focztp}, the dynamics of utility weights are determined by the following updating process
\begin{align}
\label{eq:thetaupdate}
\theta_{t+1}^i = \frac{\theta_t^i\delta^i}{\sum_j \theta_t^j\delta^j}, & & i=1,\ldots,n;\quad  t=0,1,\ldots,
\end{align}
which defines the transition map $F$. This is [...] 

Note that the updating process for the utility weights defined by \eqref{eq:thetaupdate} is quite relentless: if $\theta_t^i=0$ for some $i$ at any period $t$, then it will take a zero value forever, and agent $i$ is practically removed from the group. In this paper, this possibility is ruled out in equilibrium by assumptions \ref{asm:uprime} and \ref{asm:fprime}, as long as $\theta_0^i > 0$ for all $i \in N$. Another consequence of the dynamics implied by \eqref{eq:thetaupdate} is that for any $j=2,\ldots,n$, it follows that
\begin{equation*}
\frac{\theta_{t+1}^j}{\theta_{t+1}^1} =\left(\frac{\delta^j}{\delta^1}\right)\frac{\theta_t^j}{\theta_t^1} \to 0, \quad \text{as}\ t \to +\infty,
\end{equation*}
so the relative weight on aggregate utility vanishes asymptotically for all agents, with the exception of the most patient. This and \eqref{eq:simplex} immediately imply that $\theta_t^j \to 0^+$, for all $j=2,\ldots,n$, and $\theta_t^1 \to 1^-$ as $t$ approaches infinity. Hence agent 1's consumption equals $x_t$, while the others consume zero (or a minimum subsistence level), a result known as \emph{Ramsey's conjecture}. But the analysis focuses on the case where $\theta_t^i > 0$ holds for every $i \in N$ and for any finite $t$, so if the equilibrium implied by Ramsey's conjecture holds, it may hold only asymptotically.

From the above discussion, it can be concluded that \eqref{eq:PP} admits a sequential formulation with aggregate recursive preferences, which will be called the \emph{recursive preference formulation} (RPF), and has the form
\begin{align}
\label{eq:RPF}
\sup_{\mathbf{\hat{x}} \in \Psi(k_0),\,\mathbf{k} \in \Pi(k_0)}\ &\ \sum_{t=0}^\infty \beta_t\,U(\hat{x}_t,\theta_t)\tag{RPF}\\[5pt]
\text{s.t. } \quad 
& \hat{x}_t + k_{t+1} \leq f(k_t), & t=0,1,\ldots,\nonumber\\[5pt]
&\beta_{t+1} \leq \mu(\theta_t)\beta_t, & t=0,1,\ldots, \nonumber\\[5pt]
&\theta_{t+1} = F(\theta_t), & t=0,1,\ldots,\nonumber\\[5pt]
&k_0,\theta_0 > 0\ \text{given},\ \beta_0 = 1.\nonumber
\end{align} 

Furthermore, the value function $V$ associated with \eqref{eq:RPF} satisfies a generalized Bellman equation given by \eqref{eq:RFBellman}.

\section{Collective Preferences}
\label{sec:collective}

This section develops sufficient conditions under which the optimal allocations can be written as a function of the state of the system $(\hat{x}_t,\theta_t)$, and the collective utility function $U(\hat{x}_t,\theta_t)$ is multiplicatively separable (up to an additive constant). This is explained in part by the fact that the literature on dynamic choice generally assumes a preference relation $\succsim_t$ over dated commodities $(\hat{x},t)$ that can be represented by a separable function, e.g., where the discount factor, say $D(t)$, depends on time, and instantaneous utility $U(\hat{x})$ is a function of current consumption. 

In order to characterize optimal intertemporal allocations, it is useful to define an index of \emph{tolerance for consumption fluctuations} (TCF) given by $\alpha:=-u'/u''$, which has been introduced by \cite{gollierzeckh05}. Using a specific form for the instantaneous utility function $u$, that yields a TCF index as an affine function of consumption, allows to obtain the desired separability for $U$. This is hardly surprising, given the analogous relation between the concavity of the instantaneous utility function and risk tolerance/TCF.

\subsection{Separability}
\label{sec:separab}

Time additivity, as shown in the proof of Theorem \ref{thm:RFBellman}, has an important consequence, namely that the allocation of aggregate consumption among agents can be solved independently from the collective investment decision. Hence, the distributive problem can be treated as an \emph{intratemporal} or static program. This is key to obtain the separable form for $U$. 

Fix $t \geq 0$ together with a feasible level of aggregate consumption $\hat{x}_t > 0$. Given $\theta_0 > 0$, the vector of utility weights is determined by the formula $\theta_t=F^t(\theta_0)$, where $F^t$ is the $t$-fold application of the map $F$ described in \eqref{eq:thetaupdate}. Then, the collective instantaneous utility function $U:\hat{X}\!\times \Theta^n \to \underline{\mathbb{R}}$ is the value of the following program
\begin{align}
\label{eq:staticp}
\sup_{x_t \in X}\ &\sum_{i=1}^n \theta_t^i u_i(x_t^i)\tag{S}\\
\text{s.t. } \ & \sum_{i=1}^n x_t^i \leq \hat{x}_t.\nonumber\
\end{align}

\begin{defn}
A \emph{sharing rule} is a map $s: \hat{X}\!\times \Theta^n \to \mathbb{R}_+^n$ that satifies
\begin{equation}
\label{eq:resrecs}
\sum_{i=1}^n s^i(\hat{x}_t,\theta_t) = \hat{x}_t.
\end{equation}
A \emph{Pareto optimal sharing rule} is a sharing rule that solves \eqref{eq:staticp}.
\end{defn} 

The set $\mathcal{S}$ of all sharing rules $s$ is nonempty, since the allocation $s^i=\hat{x}_t/n$ for all $i \in N$ is always feasible. Note that there is a correspondence between this sharing rule and the one defined in  \eqref{eq:mguit}, given by $s^i(\hat{x},\theta)=\sigma^i(\theta^i,\lambda(\hat{x},\theta))$. The following result characterizes aggregate instantaneous utility and the optimal sharing rule.
\begin{prop}
\label{prp:Uchar}
The aggregate instantaneous utility function $U: \hat{X}\!\times \Theta^n \to \underline{\mathbb{R}}$ and the Pareto optimal sharing rule $s:\hat{X}\!\times \Theta^n \to \mathbb{R}_+^n$ satisfy the following properties: 
\begin{enumerate}[label=\emph{(\alph*)},ref=(\alph*),leftmargin=*,itemsep=0pt,
topsep=1pt]
\item\label{prp:Umonccv} For each $\theta \in \Theta^n$, $U(\cdot,\theta): \hat{X} \to \mathbb{R}$ is strictly increasing and strictly concave; 
\item\label{prp:Uhomog} For each $\hat{x} \in \hat{X}$, $U(\hat{x},\cdot):\Theta^n \to \mathbb{R}$ is homogeneous of degree one and strictly convex;
\item\label{prp:Udiffb} $U$ is twice continuously differentiable in the interior of $\hat{X}\!\times\Theta^n$;
\item\label{prp:shomog} For each $i \in N$: $s^i(0,\theta)=0$,  for all $\theta \in \Theta^n$; and $s^i(\hat{x},\cdot):\Theta^n \to \mathbb{R}_+^n$ is homogeneous of degree zero, for all $\hat{x} \in \hat{X}$;
\item\label{prp:sdiffb} $s$ is continuously differentiable in the interior of $\hat{X}\!\times\Theta^n$.
\end{enumerate}
\end{prop}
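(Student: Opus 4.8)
The plan is to recognise that the static program \eqref{eq:staticp} is precisely the intratemporal subproblem already analysed in Steps~2 and~4 of the proof of Theorem~\ref{thm:RFBellman}: its solution is the Pareto optimal sharing rule, which by the correspondence recorded after \eqref{eq:resrecs} satisfies $s^i(x,\theta)=\tilde s\big(\theta^i,\lambda(x,\theta)\big)$ with $\tilde s$ and $\lambda$ as in \eqref{eq:mguit}--\eqref{eq:Qdef}, and the value is $U(x,\theta)=\sum_i\theta^i u\big(s^i(x,\theta)\big)$. The first task is then to record that for each $(x,\theta)$ with $x>0$ this subproblem has a unique maximizer: the feasible set $\{\hat x\in\mathbb{R}_+^n:\sum_i x^i\le x\}$ is compact and the objective is, by \ref{asm:u0}, upper semicontinuous and strictly concave; the constraint binds because $u$ is strictly increasing; and the maximizer is interior by \ref{asm:u0}--\ref{asm:uprime} (immediate if $u(0)=-\infty$, and forced by $\lim_{x\to0^+}u'(x)=+\infty$ if $u(0)=0$). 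Hence for $x>0$ the optimizer is characterized by the first-order system
\begin{equation*}
\theta^i u'\!\big(s^i(x,\theta)\big)=\lambda(x,\theta),\quad i=1,\ldots,n,\qquad\sum_{i=1}^n s^i(x,\theta)=x,
\end{equation*}
for a multiplier $\lambda(x,\theta)>0$; and for $x=0$ feasibility forces $s^i(0,\theta)=0$ for every $i$, which is the first assertion of \ref{prp:shomog}.

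Part \ref{prp:Uhomog} and the homogeneity claim in \ref{prp:shomog} are then purely algebraic: replacing $\theta$ by $\alpha\theta$ with $\alpha>0$ multiplies the objective of \eqref{eq:staticp} by $\alpha$ without changing the set of maximizers, so, extending $U$ and $s$ to the cone $\mathbb{R}_{++}^n$ in the second argument by the same formulas, $U(x,\alpha\theta)=\alpha U(x,\theta)$ and $s(x,\alpha\theta)=s(x,\theta)$. For \ref{prp:Umonccv}: $U(\cdot,\theta)$ is strictly increasing because reallocating a marginal increase in $x$ to an agent with positive weight strictly raises the objective; it is concave because \eqref{eq:staticp} is a concave maximization with the parameter $x$ entering the constraint linearly; and it is strictly concave because for $x_1\ne x_2$ the optimizers $s(x_1,\theta)$ and $s(x_2,\theta)$ must differ in some coordinate $i$ with $\theta^i>0$ (the coordinates in $\{i:\theta^i>0\}$ sum to $x_1$ and $x_2$ respectively), so convex-combining the two optimizers, using the combination as a feasible point for the intermediate aggregate, and invoking strict concavity of $u$ in that coordinate yields the strict inequality.

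For the differentiability statements \ref{prp:Udiffb} and \ref{prp:sdiffb} I would run the implicit function theorem on the map $G(s,\lambda;x,\theta)$ whose components are the $n$ first-order equations and the resource equation displayed above, for $x$ in the interior of $X$ and $\theta\in\mathbb{R}_{++}^n$ (afterwards restricting to the relative interior of $X\times\Theta^n$ by homogeneity). Since $u\in C^2$ on $\mathbb{R}_+\backslash\{0\}$ by \ref{asm:uprime} and the optimizer is interior, $G$ is $C^1$; its Jacobian in $(s,\lambda)$ is the bordered matrix with diagonal block $\operatorname{diag}\big(\theta^i u''(s^i)\big)$, borders $-\mathbf 1$ and $\mathbf 1^{\!\top}$, and zero corner, whose determinant equals $\big(\prod_i\theta^i u''(s^i)\big)\big(\sum_j 1/(\theta^j u''(s^j))\big)\ne 0$ because $u''<0$ and $\theta^i>0$ there — this is the computation sketched in the footnote to Step~4 of the proof of Theorem~\ref{thm:RFBellman}. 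Hence $s$ and $\lambda$ are $C^1$ on the interior of $X\times\Theta^n$, which is \ref{prp:sdiffb}; and by the envelope theorem (using the first-order conditions) $\partial U/\partial x=\lambda(x,\theta)$ and $\partial U/\partial\theta^i=u\big(s^i(x,\theta)\big)$, the former $C^1$ because $\lambda$ is $C^1$ and the latter $C^1$ because $u$ is $C^1$ and $s^i$ is $C^1$, so $U\in C^2$, which is \ref{prp:Udiffb}.

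I expect the main obstacle to be the interiority of the optimizer: it is exactly what makes the first-order conditions hold with equality and the implicit function theorem applicable, and it is where the Inada-type condition in \ref{asm:uprime} enters; after that the only nonroutine points are the nonsingularity of the bordered Jacobian and the comparative-statics argument underlying strict concavity of $U(\cdot,\theta)$. Nothing is claimed at $x=0$ beyond continuity, so the lack of smoothness there causes no difficulty.
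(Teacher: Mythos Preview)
Your argument is correct. The main methodological difference from the paper is in how you establish the homogeneity statements \ref{prp:Uhomog} and \ref{prp:shomog}. You use the direct scaling observation that replacing $\theta$ by $\alpha\theta$ multiplies the objective of \eqref{eq:staticp} by $\alpha$ while leaving the feasible set and hence the maximizer unchanged; this gives $U(x,\alpha\theta)=\alpha U(x,\theta)$ and $s(x,\alpha\theta)=s(x,\theta)$ immediately, without any appeal to differentiability. The paper instead first takes differentiability of $U$ and $s$ as given, derives the envelope identity $\partial U/\partial\theta^i=u(s^i(x,\theta))$, and then invokes Euler's theorem (and its corollary on partial derivatives) to obtain the two homogeneity claims. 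Your route is shorter and logically cleaner, since it does not require smoothness as an input; the paper's route has the side benefit of producing the envelope formula explicitly, though you recover the same formula anyway when proving \ref{prp:Udiffb}. You also supply self-contained arguments for \ref{prp:Umonccv}, \ref{prp:Udiffb}, and \ref{prp:sdiffb} via the implicit function theorem on the bordered first-order system, whereas the paper simply defers those parts to \cite{mascolell89}.
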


The definitions of individual and aggregate indices of TFC are  given next.

\begin{defn}
The \emph{individual index of TCF} for agent $i \in N$ is a map $\alpha^i: \hat{X}\!\times \Theta^n \to \mathbb{R}_+$ defined by
\begin{equation*}
\alpha^i(\hat{x}_t,\theta_t):=-\frac{u_i'(s^i(\hat{x}_t,\theta_t))}{u_i''(s^i(\hat{x}_t,\theta_t))}.
\end{equation*}
The \emph{collective index of TCF}, $\alpha: \hat{X}\!\times\Theta^n \to \mathbb{R}_+$ is defined in terms of $U(\hat{x}_t,\theta_t)$ as 
\begin{equation*}
\alpha(\hat{x}_t,\theta_t):=-\frac{\partial{U(\hat{x}_t,\theta_t)}}{\partial{\hat{x}}}\left[\frac{\partial{U^2(\hat{x}_t,\theta_t)}}{\partial{\hat{x}^2}}\right]^{-1}.
\end{equation*}
\end{defn}

A simple characterization of individual and collective indices of TCF is given next. A similar result is obtained in  \cite{gollierzeckh05}, when $\hat{x}_t$ follows an exogenous process. Note that the result is independent of  functional forms.  

\begin{prop} 
\label{prp:alphachar}
Assume that $s \in \mathcal{S}$ is an optimal sharing rule. Then, for each $i \in N$ and for all $(\hat{x}_t,\theta_t) \in \hat{X}\!\times \Theta^n$, individual and  aggregate indices of TCF satisfy the following properties
\begin{equation*}
\alpha^i(\hat{x}_t,\theta_t)=\frac{\partial{s^i}(\hat{x}_t,\theta_t)}{\partial{\hat{x}}}\left(\sum_{j=1}^n\alpha^j(\hat{x}_t,\theta_t)\right), \quad \text{and} \quad
\alpha(\hat{x}_t,\theta_t)=\sum_{i=1}^n\alpha^i(\hat{x}_t,\theta_t).
\end{equation*}
\end{prop}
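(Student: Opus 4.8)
The plan is to work directly from the first-order (KKT) conditions of the static program \eqref{eq:staticp}, which at an interior optimum read $\theta^i u'(s^i(x,\theta)) = \lambda(x,\theta)$ for every $i \in N$, together with the resource constraint \eqref{eq:resrecs} holding with equality, $\sum_i s^i(x,\theta)=x$. Throughout I fix $(x,\theta)$ in the interior of $X\times\Theta^n$, so that $x>0$, $\theta^i>0$ and hence $s^i>0$ for all $i$; by the differentiability of $U$ and $s$ granted in Proposition \ref{prp:Uchar}, all derivatives below exist.

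For the first identity I would differentiate $\theta^i u'(s^i(x,\theta)) = \lambda(x,\theta)$ with respect to $x$, obtaining $\theta^i u''(s^i)\,\partial s^i/\partial x = \partial\lambda/\partial x$. Using $\theta^i = \lambda/u'(s^i)$ to eliminate $\theta^i$ and the definition of $\alpha^i$, this rearranges to $\partial s^i/\partial x = -\big((\partial\lambda/\partial x)/\lambda\big)\,\alpha^i(x,\theta)$. Summing over $i\in N$ and invoking $\sum_i s^i(x,\theta)=x$, the left-hand side collapses to $1$, which pins down the common factor as $-(\partial\lambda/\partial x)/\lambda = 1/\sum_j \alpha^j(x,\theta)$. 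Substituting back yields $\partial s^i/\partial x = \alpha^i(x,\theta)/\sum_j \alpha^j(x,\theta)$; note that as a by-product $\partial\lambda/\partial x<0$, consistent with $\alpha^j>0$.

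For the second identity I would first establish the envelope relation $\partial U/\partial x = \lambda(x,\theta)$: differentiating $U(x,\theta)=\sum_i \theta^i u(s^i(x,\theta))$ in $x$ and using the KKT conditions gives $\partial U/\partial x = \lambda\sum_i \partial s^i/\partial x = \lambda$. Hence $\partial^2 U/\partial x^2 = \partial\lambda/\partial x$, so from \eqref{eq:alphaggdef} the collective TCF index is $\alpha(x,\theta) = -\lambda/(\partial\lambda/\partial x)$; combining this with the relation $-(\partial\lambda/\partial x)/\lambda = 1/\sum_j \alpha^j(x,\theta)$ derived above gives $\alpha(x,\theta)=\sum_{j}\alpha^j(x,\theta)$, as claimed.

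There is no serious obstacle once differentiability is taken as given; the only points requiring care are (i) restricting to interior $(x,\theta)$ so that $u''(s^i)$ is defined and nonzero and $\lambda>0$, and (ii) verifying $\partial\lambda/\partial x\neq 0$ so that the divisions are legitimate — both follow from the strict concavity of $u$ in \ref{asm:u0} and the strict positivity of the $\alpha^i$ established along the way. If one preferred not to cite differentiability of $s$ and $U$ from \cite{mascolell89}, the extra work would be an implicit-function-theorem argument applied to the system $\{\theta^i u'(s^i)=\lambda,\ \sum_i s^i = x\}$, whose Jacobian in $(s^1,\dots,s^n,\lambda)$ is nonsingular precisely because $u''<0$; this is the step I would expect to be the most delicate.
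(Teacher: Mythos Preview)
Your argument is correct and is precisely the standard derivation: differentiate the first-order condition $\theta^i u'(s^i)=\lambda$ in $x$, use the adding-up constraint to identify the common factor $-\lambda^{-1}\partial\lambda/\partial x$, and then apply the envelope relation $\partial U/\partial x=\lambda$ to recover the aggregate index. The paper itself does not supply a proof of this proposition but simply refers the reader to \cite{gollierzeckh05} and \cite{wilson68}; your write-up is essentially the argument one finds there, so there is nothing to compare beyond noting that you have made explicit what the paper leaves to citation.
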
 

\begin{proof}
See \cite{gollierzeckh05} and \cite{wilson68}. 
\end{proof}

In order to obtain the desired separability property for $U$, some additional results are borrowed from works on optimal risk sharing and syndicate theory. In particular, \citet{amerstoeck83} established a sufficient condition for aggregation that requires separability, based on initial findings by \citet{wilson68}. The condition is related to a class of utility functions representing individual preferences that yield affine sharing rules.\footnote{In an environment with risk averse agents, interpreting $\theta \in \Theta^n$ as the realization of some random variable with a known distribution, if individual utility functions belong to the hyperbolic absolute risk aversion (HARA) class with identical cautiousness, optimal sharing rules are affine functions.} The same class of utility functions  satisfies an analogous property in terms of TCF.
\begin{defn}
A periodic utility function $u:\mathbb{R}_+\!\to \underline{\mathbb{R}}$  satisfies the property of \emph{affine tolerance for consumption fluctuations} (ATCF) if it has the form
\begin{align}
\label{eq:ATCFdef}
u(x)=
\begin{cases}
\frac{\gamma}{1-\gamma}\left[\left(\phi + \frac{\eta}{\gamma}\,x\right)^{1-\gamma}-1\right], &  0 < \gamma < +\infty,\ \gamma \neq 1,\\[5pt]
\log(\phi+\eta\,x), & \gamma =1,
\end{cases}
\end{align}
with $\phi+(\eta/\gamma)\,x \geq 0$, $0 < \eta < +\infty$, and $\phi \in \mathbb{R}$. This family of parametric utility functions will be referred to as the \emph{ATCF class}.
\end{defn} 

Note that the ATCF class includes utility functions that are commonly used in the literature. For instance,  $\phi =0$ and $\eta=1$ yields the power utility function. As $\gamma \to 1$, it converges to the logarithmic form $u(x)=\log\left(\phi+\eta\,x\right)$, with $\phi + \eta\,x >0$. For $\phi=1$, taking the limit $\gamma \to +\infty$ yields the exponential form  $u(x)=1-\exp(-\eta\,x)$. Note that if $\phi < 0$, consumption has a lower bound, i.e., $x \geq -\phi\gamma/\eta$, then $u$ takes the form of Stone-Geary preferences. The case $\phi \geq 0$ is compatible with $x \geq 0$.

Suppose that each individual utility belongs to the ATFC class. In order to obtain exact aggregation, which allows for analytical results, the following particular form of an individual  utility function is adopted
\begin{equation}
\label{eq:uatfcdef}
u_i(x_t^i):=\frac{\gamma}{1-\gamma}\left[\left(\phi^i + \frac{\eta}{\gamma}\,x_t^i\right)^{1-\gamma}-1\right],
\end{equation}
with $\phi^i \in \mathbb{R}$ for each $i \in N$. Exact aggregation mainly depends on the assumption that $\gamma^i = \gamma$, for all $i \in N$. Adding heterogeneity through the parameter $\eta$ does not affect the results, but complicates the calculations in a substantive manner. For the remaining of this section, the ATFC class for individual utilities is always assumed to have the form given in \eqref{eq:uatfcdef}. The following result characterizes optimal sharing rules under ATCF preferences.

%\newpage

\begin{prop}
\label{prp:sharerule}
Assume that each agent has an instantaneous utility function $u_i$ that belongs to the ATCF class. Further assume $(k_0,\theta_0) \gg 0$ and that the solution to \eqref{eq:staticp} is interior for every $t$. Then, the optimal sharing rule for each agent $i \in N$ has the form
\begin{align}
\label{eq:sharerule}
s^i(\hat{x}_t,\theta_t)=a^i(\theta_t)\,\hat{x}_t + b^i(\theta_t),
\end{align}
where $a^i(\theta_t) \geq 0$, 
$\sum_i a^i(\theta_t)=1$, and $ 
\sum_i b^i(\theta_t)=0$, for all $\theta_t \in \Theta^n$.
\end{prop}

\begin{proof}
For each $i$ and for each $t$, the optimal sharing rule is given by
\begin{align*}
\sigma^i(\theta_t^i,\lambda_t)=\frac{\gamma}{\eta}\left[\left(\frac{\eta\theta_t^i}{\lambda_t}\right)^{\frac{1}{\gamma}}-\phi^i\right].
\end{align*}
Adding up over $i$ yields aggregate consumption $\hat{x}_t$ in terms of $(\theta_t,\lambda_t)$,
\begin{align*}
\hat{x}_t = \sum_{i=1}^n \sigma^i(\theta_t^i,\lambda_t)=\frac{\gamma}{\eta}\left[\left(\frac{\eta}{\lambda_t}\right)^{\frac{1}{\gamma}}\sum_{i=1}^n(\theta_t^i)^{\frac{1}{\gamma}}-\hat{\phi}\right],
\end{align*}
where $\hat{\phi}:= \sum_{i=1}^n \phi^i$. Rearranging terms in the above expression, 
\begin{align*}
\lambda_t= \left(\sum_{i=1}^n(\theta_t^i)^{\frac{1}{\gamma}}\right)^\gamma \eta \left(\hat{\phi}+\frac{\eta}{\gamma}\,\hat{x}_t \right)^{-\gamma}, 
\end{align*}
and after appropriate substitutions, 
\begin{align*}
s^i(\hat{x}_t,\theta_t) &= \left[\frac{(\theta_t^i)^{\frac{1}{\gamma}}}{\sum_j(\theta_t^j)^{\frac{1}{\gamma}}}\right]\,\hat{x}_t + \frac{\gamma\hat{\phi}}{\eta}\left[\frac{(\theta_t^i)^{\frac{1}{\gamma}}}{\sum_j(\theta_t^j)^{\frac{1}{\gamma}}}-\frac{\phi^i}{\hat{\phi}}\right],
\end{align*}
hence the sharing rule is an affine function of $\hat{x}_t$, with coefficients given by 
\begin{align}
\label{eq:coeffshare}
a^i(\theta_t)=\frac{(\theta_t^i)^{\frac{1}{\gamma}}}{\sum_j(\theta_t^j)^{\frac{1}{\gamma}}} \quad \text{and} \quad
b^i(\theta_t)=\frac{\gamma\hat{\phi}}{\eta}\left[\frac{(\theta_t^i)^{\frac{1}{\gamma}}}{\sum_j(\theta_t^j)^{\frac{1}{\gamma}}}-\frac{\phi^i}{\hat{\phi}}\right],
\end{align}
as claimed. 
\end{proof}

Motivated by the characterization given in Proposition \ref{prp:sharerule}, certain efficiency and fairness criteria implied by the optimal sharing rule can be identified. Let $\tilde{\theta}_t^i$ denote the \emph{effective Pareto weight} for agent $i$, defined as
\begin{align}
\label{eq:thetahat}
\tilde{\theta}_t^i:=\frac{(\theta_t^i)^{\frac{1}{\gamma}}}{\sum_j(\theta_t^j)^{\frac{1}{\gamma}}}, & & t=0,1,\ldots,
\end{align}
and note that $0 \leq \tilde{\theta}_t^i \leq 1$ and $\sum_i \tilde{\theta}_t^i =1$.
In other words, $\tilde{\theta}_t:=\big(\tilde{\theta}_t^1,\ldots,\tilde{\theta}_t^n\big) \in \Theta^n$. Since the optimal sharing rule given in \eqref{eq:sharerule} is homogeneous of degree zero, it can be alternatively written as
\begin{align}
\label{eq:srulehat}
s^i(\hat{x}_t,\tilde{\theta}_t) &= \tilde{\theta}_t^i\,\hat{x}_t +\frac{\gamma\hat{\phi}}{\eta}\left(\tilde{\theta}_t^i-\tilde{\phi}^i\right), & & i=1,\ldots,n;\quad t=0,1,\ldots,
\end{align}
where $\tilde{\phi}^i:=\phi^i/\hat{\phi}$. The first term on the right-hand side of \eqref{eq:srulehat} implies that each agent $i$ receives a proportion $\tilde{\theta}_t^i$ of aggregate consumption, which can be interpreted as an efficiency criterion. The second term is proportional to the difference between the corresponding effective weight $\tilde{\theta}^i$ and the individual parameter $\tilde{\phi}^i$, establishing a fairness criterion. 

It is clear from \eqref{eq:srulehat} that the optimal sharing rule $s^i$ allocates aggregate consumption according to the effective weights and a system of transfers between agents. This transfer scheme is purely redistributive, since aggregate transfers add up to zero. Loosely speaking, $s^i$ shows an implicit individual  ``preference for income redistribution'' determined by $\tilde{\phi}^i$. 

For simplicity, assume that $\phi^i = \phi$ for all $i \in N$.   To rule out uninteresting cases, also assume that $\tilde{\theta}_t^i \neq \tilde{\phi}^i=(1/n)$ holds for at least one $i \in N$.  Three distinct cases  emerge:
\begin{enumerate}[label=(\roman*),leftmargin=*]
\item If $\phi=0$, then $s^i$ implies a \emph{neutral} preference for income redistribution. This is the case when $u$ is a power function or has the log form.
\item If $\phi < 0$, then $s^i$ implies a preference for \emph{top-to-bottom} income redistribution. The set of agents $N$ is partitioned into a subset $D \subset N$ of agents in a relative disadvantaged situation, 
\begin{align*}
D:=\left\{j \in N: \left(\tilde{\theta}_t^j - \frac{1}{n}\right) < 0\right\},
\end{align*}
who receive a transfer from those in a position of relative advantage grouped in a subset $A \subset N$,
\begin{align*}
A:=\left\{l \in N:\left(\tilde{\theta}_t^l - \frac{1}{n}\right) \geq 0\right\}.
\end{align*}
Naturally, the sets $D$ and $A$ are disjoint and $N = D \cup A$. 
\item If $\phi >0$, then $s^i$ implies a preference for \emph{bottom-to-top} income redistribution, so this case is  opposite to the previous case.
\end{enumerate}

Simple substitution shows that the collective instantaneous utility function, when individual utilities belong to the ATCF class, has the form
\begin{equation}
\label{eq:Ucthetadef}
U(\hat{x}_t,\theta_t)=\frac{\gamma}{1-\gamma}\left[\left(\sum_i (\theta_t^i)^{\frac{1}{\gamma}}\right)^\gamma\left(\hat{\phi} +\frac{\eta}{\gamma}\,\hat{x}_t\right)^{1-\gamma}-1\right],
\end{equation}
which is clearly separable in $x_t$ and $\theta_t$ (up to an additive constant). Note also that the aggregate TCF index is independent of $\theta_t$ and each individual TCF index is separable in $(\hat{x}_t,\theta_t)$, that is,
\begin{equation*}
\hat{\alpha}(\hat{x}_t,\theta_t)=\frac{\hat{\phi}}{\eta}+\frac{1}{\gamma}\,\hat{x}_t \quad \text{and} \quad
\alpha^i(\hat{x}_t,\theta_t)=a^i(\theta_t)\,\hat{\alpha}(\hat{x}_t),
\end{equation*}
where $a^i(\theta_t)$ is defined in \eqref{eq:coeffshare}.  Moreover, $\partial{s^i}/\partial{\hat{x}} = a^i(\theta_t)$, for all  $i \in N$ and $\theta \in \Theta^n$. It is then verified that both $U$ and $s$ satisfy the properties of Propositions \ref{prp:Uchar} and \ref{prp:alphachar}.

\subsection{Equivalence Results}
\label{sec:equiv}

This section presents several equivalent formulations of the Pareto problem \eqref{eq:PP}. For this, some of the previous  results require additional specifications. Let $\mu:\Theta^n \to [0,1]$ represent the aggregate discount rate defined as 
\begin{align}
\label{eq:mutdef}
\mu(\theta_t) := \sum_{i=1}^n \theta_t^i\delta^i, & & t=0,1,\ldots
\end{align}
Note that this function takes values on $[\delta_n,\delta_1] \subset [0,1]$ for all $\theta_t \in \Theta^n$, it is strictly  increasing in each argument, and homogeneous of degree one. 

Since $\mu$ is bounded on $\Theta^n$, it is clear that the sequence of discount factors $\{\beta_t\}_{t=0}^\infty$ is bounded on the space $(0,1)^\infty$. This, together with \eqref{eq:thetaupdate}, implies that the program \eqref{eq:RPF} is well-behaved, in a sense that will become clear below, and it satisfies a generalized Bellman equation.\footnote{The existence of a solution to functional equations arising in dynamic programming with generalized discounting has been shown under fairly general conditions by \citet{bhaktamitra84} and  \cite{bhaktachoud88} for the case of bounded returns, and by  \cite{jaskiewiczetal14} for unbounded returns.} For $\hat{x}_t > 0$ and $\theta_t$ in the interior of $\Theta^n$, the Euler equation associated with \eqref{eq:RPF} is given by
\begin{align}
\label{eq:RPFEuler}
\frac{\partial{U}(\hat{x}_t,\theta_t)}{\partial{\hat{x}}}=\mu(\theta_t)\,\frac{\partial{U}(\hat{x}_{t+1},\theta_{t+1})}{\partial{\hat{x}}}\,f'(k_{t+1}), 
\end{align}
and assuming ATCF preferences, it follows from \eqref{eq:Ucthetadef} and \eqref{eq:mutdef} that
\begin{align}
\label{eq:RPFEuler1}
\left(\textstyle\sum_i(\theta_t^i)^{\frac{1}{\gamma}}\right)^\gamma
\left(\hat{\phi} +\tfrac{\eta}{\gamma}\,\hat{x}_t\right)^{-\gamma}\!= \left(\textstyle\sum_i \theta_t^i\delta^i\right)\left(\textstyle\sum_i(\theta_{t+1}^i)^{\frac{1}{\gamma}}\right)^\gamma
\left(\hat{\phi} +\tfrac{\eta}{\gamma}\,\hat{x}_{t+1}\right)^{-\gamma}  f'(k_{t+1}).
\end{align}

Given that condition \eqref{eq:RPFEuler} is necessary and sufficient for optimality, it is possible to define an aggregate utility function $\hat{U}$, which is independent of $\theta_t$, and an effective discount rate $\hat{\mu}$ satisfying an equivalent condition, i.e.,
\begin{align}
\label{eq:RPFEuler2}
\tilde{U}'(\hat{x}_t) = \tilde{\mu}(\theta_t)\tilde{U}'(\hat{x}_{t+1})f'(k_{t+1}).
\end{align}
It is apparent from \eqref{eq:RPFEuler1} that $\tilde{U}$ belongs to the ATCF class, as can be seen by substituting $\phi^i$ for $\hat{\phi}$ and $x_t^i$ for $\hat{x}_t$ in \eqref{eq:ATCFdef}. Therefore, 
\begin{equation}
\label{eq:RPFUhatdef}
\tilde{U}(\hat{x}_t)=\frac{\gamma}{1-\gamma}\left[\left(\hat{\phi} +\frac{\eta}{\gamma}\,\hat{x}_t\right)^{1-\gamma}-1\right].
\end{equation}
Collecting the terms that depend on $\theta_t$ and $\theta_{t+1}$ in \eqref{eq:RPFEuler1} and using \eqref{eq:thetaupdate},  the \emph{effective aggregate discount rate} $\tilde{\mu}(\theta)$ is defined as
\begin{align}
\label{eq:muhatdef}
\tilde{\mu}(\theta_t):=\left[\frac{\sum_i (\theta_t^i\delta^i)^{\frac{1}{\gamma}}}{\sum_i (\theta_t^i)^{\frac{1}{\gamma}}}\right]^\gamma=\left[\textstyle\sum_i a^i(\theta_t^i)\,(\delta^i)^{\frac{1}{\gamma}}\right]^\gamma.
\end{align}
As $\mu$ is a weighted arithmetic mean, the effective discount rate $\tilde{\mu}$ is a \emph{weighted generalized mean} (or \emph{weighted power mean}) of the individual discount factors with weights $a^i(\theta_t^i)$, for each $i \in N$, and exponent $\gamma$. Only if individual preferences $u_i$ are logarithmic, $\mu$ and $\tilde{\mu}$ coincide.

Hence there is a dynamic program equivalent to \eqref{eq:RPF}. Since $\tilde{U}$ is differentiable, the solution satisfies an Euler equation equivalent to \eqref{eq:RPFEuler1}, which is given by \eqref{eq:RPFEuler2}. By Theorem \ref{thm:RFBellman}, the equivalent program also satisfies \eqref{eq:RFBellman}. These results are summarized in the next proposition.  

\begin{prop}
\label{prp:RPFprime}
Assume that preferences are ATCF. Then, the following program is equivalent to \eqref{eq:RPF}
\begin{align}
\label{eq:RPFprime}
\sup_{\mathbf{\hat{x}}\in \Psi(k_0),\,\mathbf{k} \in \Pi(k_0)}\ &\ \sum_{t=0}^\infty \tilde{\beta}_t\,\tilde{U}(\hat{x}_t)\tag{RPF'}\\
\text{s.t. } \quad & \hat{x}_t + k_{t+1} \leq f(k_t), & & t=0,1,\ldots,\nonumber\\[5pt]
&\tilde{\beta}_{t+1} \leq \tilde{\mu}(\theta_t)\tilde{\beta}_t, & & t=0,1,\ldots,\nonumber\\[5pt]
&\theta_{t+1}=F(\theta_t), & & t=0,1,\ldots,\nonumber\\[5pt]
&k_0,\theta_0 > 0\ \text{given},\ \hat{\beta}_0 = 1.\nonumber
\end{align} 
Moreover, \eqref{eq:RPFprime} satisfies the following Bellman equation for each $(k,\theta) \in K\!\times\Theta^n$,
\begin{equation*}
\tilde{V}(k,\theta)= \sup_{y \in \Gamma(k)}\Big[\tilde{U}(f(k)-y)+\tilde{\mu}(\theta)\,\tilde{V}(y,F(\theta))\Big],
\end{equation*}
where $\tilde{U}$ and $\tilde{\mu}$ are given by \eqref{eq:RPFUhatdef} and \eqref{eq:muhatdef}, respectively.
\end{prop}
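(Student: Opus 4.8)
The plan is to prove equivalence of \eqref{eq:RPF} and \eqref{eq:RPFprime} in the sense that the two programs have the same feasible set in $(\bds{x},\bds{k})$ and the same (unique) optimal path, and then to obtain the Bellman equation for $\hat V$ from the structure of \eqref{eq:RPFprime}. The key preliminary observation is that in both programs the path of utility weights is exogenously pinned down by $\theta_0$ through $\theta_{t+1}=F(\theta_t)$, so the genuine choice variables are only $\bds{x}$ and $\bds{k}$, and the resource constraint $x_t+k_{t+1}\le f(k_t)$ is literally identical in \eqref{eq:RPF} and \eqref{eq:RPFprime}; the auxiliary sequences $\beta_t$ and $\hat{\beta}_t$ are themselves functions of the $\theta$-path only. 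Since each objective is concave and returns are bounded on $K=[0,k_m]$, the Euler equation (with the appropriate transversality condition) is necessary and sufficient for optimality in each program — as already noted for \eqref{eq:RPF} and, for \eqref{eq:RPFprime}, supplied by the argument of Theorem \ref{thm:RFBellman} together with the cited results on generalized discounting — and by Proposition \ref{eq:Paretochar} the optimal path is unique. Hence it suffices to show that \eqref{eq:RPF} and \eqref{eq:RPFprime} have the same Euler equation.

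The core computation is the reduction of \eqref{eq:RPFEuler} to \eqref{eq:RPFEuler2} under LTCF preferences. Differentiating \eqref{eq:Ucthetadef} gives $\partial U(x_t,\theta_t)/\partial x=\eta\bigl(\textstyle\sum_i(\theta_t^i)^{1/\gamma}\bigr)^{\gamma}\bigl(\phi n+\tfrac{\eta}{\gamma}x_t\bigr)^{-\gamma}$, which turns \eqref{eq:RPFEuler} into \eqref{eq:RPFEuler1}. Using the updating rule \eqref{eq:thetaupdate} in the form $\theta_{t+1}^i=\theta_t^i\delta^i/\mu(\theta_t)$, with $\mu$ as in \eqref{eq:mutdef}, one gets $\sum_i(\theta_{t+1}^i)^{1/\gamma}=\mu(\theta_t)^{-1/\gamma}\sum_i(\theta_t^i\delta^i)^{1/\gamma}$, hence $\bigl(\textstyle\sum_i(\theta_{t+1}^i)^{1/\gamma}\bigr)^{\gamma}=\mu(\theta_t)^{-1}\bigl(\textstyle\sum_i(\theta_t^i\delta^i)^{1/\gamma}\bigr)^{\gamma}$. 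Substituting this into the right-hand side of \eqref{eq:RPFEuler1}, the factor $\mu(\theta_t)=\sum_i\theta_t^i\delta^i$ cancels, the common factor $\bigl(\textstyle\sum_i(\theta_t^i)^{1/\gamma}\bigr)^{\gamma}$ divides out of both sides, and multiplying through by $\eta$ produces exactly $\hat U'(x_t)=\hat\mu(\theta_t)\hat U'(x_{t+1})f'(k_{t+1})$ with $\hat U$ as in \eqref{eq:RPFUhatdef} and $\hat\mu$ as in \eqref{eq:muhatdef}, i.e.\ \eqref{eq:RPFEuler2}, the Euler equation of \eqref{eq:RPFprime}. This part is elementary algebra; the only bookkeeping point is that $\phi$ is replaced by $\phi n$ when passing from $u$ to $\hat U$.

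Combining the two steps gives equivalence: the unique optimal path of \eqref{eq:RPF} satisfies \eqref{eq:RPFEuler1}, hence \eqref{eq:RPFEuler2}; being feasible for \eqref{eq:RPFprime} and satisfying that program's sufficient optimality condition, it is optimal for \eqref{eq:RPFprime}, and the reverse implication is symmetric. For the Bellman equation, note that \eqref{eq:RPFprime} is itself an instance — indeed a degenerate one — of the class of problems handled by Theorem \ref{thm:RFBellman}: since $\hat U$ is already independent of $\theta$, no inner distributive (KKT) subproblem has to be solved, and the argument of that theorem applies verbatim to yield $\hat V(k,\theta)=\sup_{y\in\Gamma(k)}\bigl[\hat U(f(k)-y)+\hat\mu(\theta)\hat V(y,F(\theta))\bigr]$; existence and uniqueness of $\hat V$ follow because $\hat\mu$, being a weighted power mean of $\delta^1,\dots,\delta^n$, takes values in $[\delta^n,\delta^1]\subset(0,1)$ on $\Theta^n$, so the cited dynamic-programming results with generalized discounting apply. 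I expect the main obstacle to be conceptual rather than computational: one must be careful that ``equivalent'' is read as ``same feasible set and same optimal policy'' and not as $V=\hat V$ (which fails, since $U\neq\hat U$), and that the sufficiency direction of the Euler equation is legitimately invoked — which it is, because consumption and capital are confined to the compact set $K=[0,k_m]$, so utilities are bounded and the relevant transversality condition holds automatically.
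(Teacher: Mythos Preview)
Your proposal is correct and follows essentially the same route as the paper: the paper does not give a separate formal proof but argues in the text preceding the proposition that under LTCF the Euler equation \eqref{eq:RPFEuler1} of \eqref{eq:RPF} reduces, via the updating rule \eqref{eq:thetaupdate}, to \eqref{eq:RPFEuler2} with $\hat U$ and $\hat\mu$ as defined, and then invokes Theorem~\ref{thm:RFBellman} for the Bellman equation. Your write-up is more explicit about the logical scaffolding (identical feasible sets, uniqueness from Proposition~\ref{eq:Paretochar}, sufficiency of the Euler condition via boundedness on $K$, and the caveat that equivalence means same optimal path rather than $V=\hat V$), but the substance of the argument is the same.
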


The last step to obtain separable preferences from \eqref{eq:RPFprime} is completed as follows. It is clear from \eqref{eq:thetaupdate} that $\theta_t$ can be explicitly solved as a function of $t$ and, in turn, the aggregate discount rate  $\mu_t$ will depend on $t$ directly and not through $\theta_t$. This leads to a \emph{nonstationary dynamic programming problem}, an approach that has been recently studied by \citet{kamihigashi08}, and it involves an aggregate utility function $U: \hat{X}\!\times \mathbb{Z}_+ \to \underline{\mathbb{R}}$ and a value function $V: K \times \mathbb{Z}_+ \to \underline{\mathbb{R}}$ that depend directly on time.   

Given the initial weights $\theta_0 \in \Theta^n$, solving recursively for $\theta_t^i$ in \eqref{eq:thetaupdate} yields
\begin{align}
\label{eq:thetanonst} 
\theta_t^i = \frac{\theta_0^i(\delta^i)^t}{\sum_j \theta_0^j(\delta^j)^t}, & & t=0,1,\ldots,
\end{align}
which immediately implies
\begin{align}
\label{eq:munonst}
\mu_t = \frac{\sum_i\theta_0^i(\delta^i)^{t+1}}{\sum_i \theta_0^i(\delta^i)^t}, & & t=0,1,\ldots.
\end{align}
Next, set $\beta_0 = 1$ so each member of the sequence $\{\beta_t\}_{t=0}^\infty$ is given by
\begin{align}
\label{eq:betatdef}
\beta_t=\prod_{s=0}^{t-1} \mu_s = \sum_{i=1}^n \theta_0^i (\delta^i)^t, \qquad \text{for all }  t \geq 1.
\end{align}

Now suppose that $\theta_0^i > 0$ for each $i \in N$. The following  nonstationary formulation (NSF) of the Pareto problem is equivalent to \eqref{eq:RPF} and has the form
\begin{align}
\label{eq:NSF}
\sup_{\mathbf{\hat{x}} \in \Psi(k_0),\,\mathbf{k} \in \Pi(k_0)}\ &\ \liminf_{T \to \infty}\ \sum_{t=0}^T \beta_t\,U_t\left(\hat{x}_t\right) \tag{NSF}\\[5pt]
\text{s.t. } \quad
& \hat{x}_t + k_{t+1} \leq f(k_t), & t=0,1,\ldots,\nonumber\\[5pt]
& k_0 > 0\ \text{given}.\nonumber
\end{align} 
In addition, the value function $V_t$ associated to \eqref{eq:NSF} satisfies the (modified) Bellman equation
\begin{align*}
V_t(k_t)= &\sup_{k_{t+1} \in \hat{\Gamma}(k_t)} \Big[U_t(f(k_t)-k_{t+1}) + \beta_{t+1}V_{t+1}(k_{t+1})\Big],
\end{align*}
where $\beta_t$ is given by \eqref{eq:betatdef} for each $t$,  and $\hat{\Gamma}(k_t)$ is defined as
\begin{equation}
\label{eq:Gammahatdef}
\hat{\Gamma}(k_t):= \Big\{k_{t+1} \in \Gamma(k_t): U_t(f(k_t)-k_{t+1}) > -\infty\Big\}.
\end{equation}
This result is a straightforward application of Theorem 1 in \cite{kamihigashi08}.\footnote{It can be shown that a Bellman equation holds in a strict sense iff there is no $k_{t+1} \in \Gamma(k_t)$ such that $U_t(f(k_t)-k_{t+1})=-\infty$ and $V_{t+1}(k_{t+1})=+\infty$ simultaneously. See \cite[Theorem 2]{kamihigashi08}.}

Note that the previous formulation does not assume specific functional forms. Under ATCF preferences, collective instantaneous utility for the NSF is obtained substituting \eqref{eq:thetanonst} and \eqref{eq:munonst} into \eqref{eq:Ucthetadef}, which yields
\begin{equation*}
U_t(\hat{x})=\frac{\gamma}{1-\gamma}\left[\frac{\left(\sum_i (\theta_0^i(\delta^i)^t)^{\frac{1}{\gamma}}\right)^\gamma}{\sum_i\theta_0^i(\delta^i)^t}\left(\hat{\phi} +\frac{\eta}{\gamma}\,\hat{x}\right)^{1-\gamma}-1\right].
\end{equation*}
This function satisfies the following asymptotic properties that considerably  simplify the formulation of the nonstationary program,
\begin{align*}
U_0(\hat{x}):=\lim_{t \to 0^+} U_t(\hat{x})=\frac{\gamma}{1-\gamma}\left[\left(\textstyle\sum_i(\theta_0^i)^{\frac{1}{\gamma}}\right)^\gamma\left(\hat{\phi} +\frac{\eta}{\gamma}\,\hat{x}\right)^{1-\gamma}-1\right], \quad \text{for all}\ \hat{x} \in \hat{X},
\end{align*}
and
\begin{align*}
\lim_{t \to +\infty} U_t(\hat{x})=\frac{\gamma}{1-\gamma}\left[\left(\hat{\phi} +\frac{\eta}{\gamma}\,\hat{x}\right)^{1-\gamma}-1\right]=U(\hat{x}), \qquad \text{for all}\ \hat{x} \in \hat{X}.
\end{align*}
Moreover, the sequence $\{U_t(\hat{x})\}_{t=0}^\infty$ is strictly decreasing if $0 < \gamma \leq 1$ (resp. strictly increasing if $1 < \gamma < +\infty$) for any $\hat{x} \in \hat{X}$, and takes values on the closed interval $[U(\hat{x}),U_0(\hat{x})]$ (resp. $[U_0(\hat{x}),U(\hat{x})]$).   

Recall that $\tilde{\theta}_0 \in \Theta^n$ denotes the vector of  \emph{effective initial Pareto weights}, which is obtained from \eqref{eq:thetahat} for $t=0$,
\begin{align*}
\tilde{\theta}_0^i=\frac{(\theta_0^i)^{\frac{1}{\gamma}}}{\sum_j(\theta_0^j)^{\frac{1}{\gamma}}}, & & i = 1,\ldots,n.
\end{align*}
For ease of notation, define the \emph{effective individual discount factor} as $\tilde{\delta}^i:=(\delta^i)^{1/\gamma}$, $i \in N$, so the \emph{effective discount factor} $\tilde{\beta}_t$ associated with \eqref{eq:NSF} is given by
\begin{align}
\label{eq:betathat}
\tilde{\beta}_t=\left(\sum_{i=1}^n\tilde{\theta}_0^i(\tilde{\delta}^i)^t\right)^\gamma, & & t=0,1,\ldots
\end{align}
In a similar way as in the previous case, the Euler equation associated with \eqref{eq:NSF} is equivalent to \eqref{eq:RPFEuler1} with instantaneous utility $\tilde{U}(\hat{x})$ and effective discount factor $\tilde{\beta}_t$,
\begin{align*}
\tilde{\beta}_t\,\tilde{U}'(\hat{x}_t) &=\tilde{\beta}_{t+1}\,\tilde{U}'(\hat{x}_{t+1})f'(k_{t+1}),
\end{align*}
therefore
\begin{align*}
\left(\sum_{i=1}^n\tilde{\theta}_0^i(\tilde{\delta}^i)^t\right)^\gamma\left(\hat{\phi} +\frac{\eta}{\gamma}\,\hat{x}_t\right)^{-\gamma}=\left(\sum_{i=1}^n\tilde{\theta}_0^i(\tilde{\delta}^i)^{t+1}\right)^\gamma\left(\hat{\phi} +\frac{\eta}{\gamma}\,\hat{x}_{t+1}\right)^{-\gamma}f'(k_{t+1}).
\end{align*}
The next proposition, analogous to Proposition \ref{prp:RPFprime}, summarizes the results for the NSF.

\begin{prop}
\label{prp:NSFprime}
Assume that preferences are ATCF. Then, there exists an equivalent program to \eqref{eq:NSF} which has the form
\begin{align}
\label{eq:NSFprime}
\sup_{\mathbf{\hat{x}} \in \Psi(k_0),\,\mathbf{k} \in \Pi(k_0)}\ &\ \sum_{t=0}^\infty \tilde{\beta}_t\,\tilde{U}(\hat{x}_t)\tag{NSF'}\\[5pt]
\text{s.t. } \quad &\hat{x}_t + k_{t+1} \leq f(k_t),&  & t=0,1,\ldots,\nonumber\\[5pt]
&k_0 > 0\ \text{given},\nonumber
\end{align} 
and satisfies the (modified) Bellman equation
\begin{align*}
\tilde{V}_t(k_t)= \sup_{k_{t+1} \in \hat{\Gamma}(k_t)}\Big[\tilde{U}(f(k_t)-k_{t+1})+\tilde{\beta}_{t+1}\,\tilde{V}_{t+1}(k_{t+1})\Big],& & t=0,1,\ldots,
\end{align*}
where $\hat{\Gamma}$ has been defined in \eqref{eq:Gammahatdef} and $\tilde{\beta}_t$ is given by \eqref{eq:betathat}, for all $t$.
\end{prop}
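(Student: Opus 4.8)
The plan is to establish the equivalence of \eqref{eq:NSF} and \eqref{eq:NSFprime} by showing the two programs share the same optimal consumption-capital paths, and then verify the Bellman equation directly. First I would observe that since both programs have the same feasibility constraints (the resource constraint $x_t+k_{t+1}\le f(k_t)$ and $k_0$ given), it suffices to show that the two objective functionals are ordinally equivalent on the feasible set, or — more in the spirit of the rest of the paper — that they generate the same first-order conditions. The key calculation, already carried out in the lines preceding the proposition, is that substituting the LTCF form \eqref{eq:Ucthetadef} together with \eqref{eq:thetanonst}–\eqref{eq:munonst} into the Euler equation \eqref{eq:RPFEuler} for \eqref{eq:NSF} yields, after collecting the $\theta_0$-dependent terms, exactly the Euler equation $\hat\beta_t\,\hat U'(x_t)=\hat\beta_{t+1}\,\hat U'(x_{t+1})f'(k_{t+1})$ with $\hat U$ as in \eqref{eq:RPFUhatdef} and $\hat\beta_t$ as in \eqref{eq:betathat}. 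Since Proposition~\ref{eq:Paretochar} (via Dana–Le Van) tells us the Euler equation together with a transversality/boundedness condition is necessary and sufficient for optimality, coincidence of the Euler equations and of the feasible sets forces coincidence of the optimal paths, hence equivalence of the two programs.

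The steps, in order, would be: (i) recall from \eqref{eq:betatdef}–\eqref{eq:betathat} that $\hat\beta_t=\bigl(\sum_i\hat\theta_0^i(\hat\delta^i)^t\bigr)^\gamma$ is well-defined, strictly positive, and bounded (each $\hat\delta^i\in(0,1)$ since $\delta^i\in(0,1)$ and $\gamma>0$), so that the objective $\sum_t\hat\beta_t\hat U(x_t)$ in \eqref{eq:NSFprime} is a legitimate generalized-discounting criterion of the type covered by Theorem~1 of \cite{kamihigashi08}; (ii) write out the Euler equation for \eqref{eq:NSFprime} and match it term-by-term with \eqref{eq:RPFEuler1}, using the identity $\bigl(\sum_i\theta_0^i(\delta^i)^t\bigr)^{-1}\bigl(\sum_i(\theta_0^i(\delta^i)^t)^{1/\gamma}\bigr)^\gamma=\bigl(\sum_i\hat\theta_0^i(\hat\delta^i)^t\bigr)^\gamma/\text{(normalizing constant)}$, to conclude the stationary points agree; (iii) invoke Proposition~\ref{eq:Paretochar} and the concavity/monotonicity from assumptions \ref{asm:u0}–\ref{asm:fprime} to upgrade "same Euler equation + same constraints" to "same optimum"; (iv) apply Theorem~1 of \cite{kamihigashi08} to the data $(\hat U,\hat\beta_t,\hat\Gamma)$ to obtain the modified Bellman equation for $\hat V_t$, noting that $\hat\Gamma$ is exactly \eqref{eq:Gammahatdef} because $U_t$ and $\hat U$ have the same effective domain (both are $-\infty$ precisely when $\phi n+\tfrac{\eta}{\gamma}x<0$ for the $\gamma>1$ branch, i.e.\ the same set of feasible $k_{t+1}$).

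The main obstacle I anticipate is making the notion of "equivalence" precise and airtight: the two objective functionals in \eqref{eq:NSF} and \eqref{eq:NSFprime} are genuinely different functions of $\bds{x}$ — one is $\sum_t\beta_t U_t(x_t)$ with $U_t$ carrying a $t$-dependent multiplicative factor, the other is $\sum_t\hat\beta_t\hat U(x_t)$ — and they are not equal, nor even affine transformations of each other term-by-term. What saves the argument is that after the reparametrization $\phi\mapsto\phi n$, $x_t^i\mapsto x_t$ indicated in the text, the per-period product $\beta_t U_t(x)$ equals $\hat\beta_t\hat U(x)$ up to an additive constant absorbed into the $-1$ term and an overall positive scaling; one must check that the telescoping of these constants over $t$ does not destroy convergence of the $\liminf$ in \eqref{eq:NSF}, which it does not because $\sum_t\beta_t<\infty$ (as $\beta_t=\sum_i\theta_0^i(\delta^i)^t$ is summable). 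I would handle this by exhibiting explicit constants $A>0$, $B_t$ with $\sum_t|B_t|<\infty$ such that $\beta_tU_t(x)=A\,\hat\beta_t\hat U(x)+B_t$ for all $x\in X$, so the objectives differ by a constant and a positive rescaling and therefore have identical argmax; the rest is routine. An alternative, if one prefers to avoid the constant-bookkeeping, is to simply cite that both programs satisfy the same generalized Bellman equation \eqref{eq:RFBellman} (via Theorem~\ref{thm:RFBellman} applied through the effective discount rate $\hat\mu$ of \eqref{eq:muhatdef}, exactly as in Proposition~\ref{prp:RPFprime}) and that this Bellman equation has a unique solution under the maintained assumptions, so the two value functions — and hence the two problems — coincide.
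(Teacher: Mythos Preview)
Your proposal is correct and matches the paper's approach: the paper does not give a separate proof of this proposition but presents it as a summary of the preceding text, where the Euler equation for \eqref{eq:NSFprime} is shown to coincide with \eqref{eq:RPFEuler1} and the Bellman equation is obtained by invoking Theorem~1 of \cite{kamihigashi08}, exactly as you outline in steps (ii) and (iv). Your discussion of the additive-constant bookkeeping (exhibiting $A>0$ and summable $B_t$ with $\beta_tU_t(x)=A\,\hat\beta_t\hat U(x)+B_t$) is a useful tightening that the paper leaves implicit.
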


Having completed the construction of collective preferences of the sought form for $U$, the following section offers a characterization of these preferences.

\section{Discussion}
\label{sec:discussion}

Several properties of collective intertemporal utility functions are discussed in this section. First, preferences are characterized in terms of impatience. In particular, it is shown that they satisfy decreasing marginal impatience. Next, the properties of stationarity, time invariance, and time consistency --introduced in Section \ref{sec:intro}-- will be verified for these  collective preferences. To conclude, a comparison of the model developed in the previous section is made with a formulation of the Pareto problem under constant utility weights and an endogenous consumption-saving decision.

\subsection{Collective impatience}

\begin{defn}
Given an aggregate instantaneous utility function $U(\hat{x}_t,\theta_t)$ and a discount factor $\beta_t$, \emph{the marginal rate of intertemporal substitution} between present and future aggregate consumption, $\hat{x}_t$ and $\hat{x}_{t+1}$, is defined as
\begin{align*}
\mathcal{M}[(\hat{x}_t,\hat{x}_{t+1});(\theta_t,\theta_{t+1})]:=\beta_t\,\frac{\partial{U(\hat{x}_t,\theta_t)}}{\partial{\hat{x}}}\left[\beta_{t+1}\frac{\partial{U(\hat{x}_{t+1},\theta_{t+1})}}{\partial{\hat{x}}}\right]^{-1}.
\end{align*}
\end{defn}

Impatience is typically measured as the \emph{pure rate of time preference} $\hat{\rho}_t$, which in turn is defined as the marginal rate of intertemporal substitution when $\hat{x}_t=\hat{x}_{t+1}=\hat{x}$, for some $\hat{x} \in \hat{X}$. If the function $U$ belongs to the ATCF class, as in \eqref{eq:Ucthetadef}, it follows that
\begin{align*}
(1+ \hat{\rho}_t):=\frac{\left(\sum_i (\theta_t^i)^{\frac{1}{\gamma}}\right)^\gamma\left(\hat{\phi} +\frac{\eta}{\gamma}\,\hat{x}\right)^{-\gamma}}{\left(\sum_i \delta^i\theta_t^i\right)\left(\sum_i(\theta_{t+1}^i)^{\frac{1}{\gamma}}\right)^\gamma\left(\hat{\phi} +\frac{\eta}{\gamma}\,\hat{x}\right)^{-\gamma}}=\left[\frac{\sum_i(\theta_t^i)^{\frac{1}{\gamma}}}{\sum_i(\theta_t^i\delta^i)^{\frac{1}{\gamma}}}\right]^\gamma,
\end{align*}
and the discount rate implied by this expression is exactly $ \tilde{\rho}_t=1/\tilde{\mu}(\theta_t)-1$, where $\tilde{\mu}(\cdot)$ is given by \eqref{eq:muhatdef}. But this yields the same   effective discount factor as in \eqref{eq:betathat}. Then, the above expression offers an alternative way to obtain the effective aggregate discount factor which is independent of the optimization process.

The following lemma is needed for future reference, but it is  also an interesting result on its own, since it characterizes the behavior of aggregate discount factors as a function of time. 
\begin{lemm}
\label{lem:betadiff}
Assume that $\beta_t$ is defined as in \eqref{eq:betatdef} for all $t$. Then, for every $0 \leq t \leq t'$ and $0 \leq \tau \leq \tau'$,\footnote{This can be assumed without loss of generality, given that it is always possible to relabel $t,t'$ and $\tau,\tau'$ in order to satisfy the required inequalities.}
\begin{align*}
\frac{\beta_{t+\tau}\hfill}{\beta_{t+\tau'}}-\frac{\beta_{t'+\tau}\hfill}{\beta_{t'+\tau'}} \geq 0,
\end{align*}
with strict inequality if $t < t'$ and $\tau < \tau'$.
\end{lemm}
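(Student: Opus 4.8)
The plan is to recognize the quantity $\beta_s=\sum_{i=1}^n\theta_0^i(\delta^i)^s$ as a moment‑generating‑type expression and to reduce the claimed monotonicity to the \emph{convexity} of $s\mapsto\log\beta_s$ on $[0,\infty)$. Concretely, set $h:=\tau'-\tau\ge 0$ and $\psi(s):=\log\beta_s$, and define
\[
\phi(s):=\frac{\beta_s}{\beta_{s+h}}=\exp\!\big(\psi(s)-\psi(s+h)\big).
\]
Then the displayed difference is exactly $\phi(t+\tau)-\phi(t'+\tau)$, and since $t\le t'$ gives $t+\tau\le t'+\tau$, it suffices to show that $\phi$ is non‑increasing on $[0,\infty)$ (and strictly decreasing when $h>0$). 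Now $\phi$ non‑increasing is equivalent to $s\mapsto\psi(s+h)-\psi(s)$ being non‑decreasing for every fixed $h\ge 0$, which is precisely the assertion that $\psi$ is convex. So everything comes down to proving that $\psi(s)=\log\sum_i\theta_0^i(\delta^i)^s$ is convex, and in fact strictly convex, since $n\ge 2$, the $\delta^i$ are pairwise distinct, and $\theta_0^i>0$ for all $i$.

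For the convexity I would give the short direct argument. One route is Hölder's inequality: for $s=\alpha s_1+(1-\alpha)s_2$ with $\alpha\in(0,1)$,
\[
\beta_s=\sum_i\big(\theta_0^i(\delta^i)^{s_1}\big)^\alpha\big(\theta_0^i(\delta^i)^{s_2}\big)^{1-\alpha}\le\beta_{s_1}^{\alpha}\,\beta_{s_2}^{1-\alpha},
\]
so $\psi(\alpha s_1+(1-\alpha)s_2)\le\alpha\psi(s_1)+(1-\alpha)\psi(s_2)$; equality in Hölder would force the vectors $(\theta_0^i(\delta^i)^{s_1})_i$ and $(\theta_0^i(\delta^i)^{s_2})_i$ to be proportional, impossible for $s_1\ne s_2$ because the $\delta^i$ are distinct, which gives strictness. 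Alternatively, since $\beta_s$ is smooth and positive one computes $\psi''(s)=\beta_s^{-2}\big(\beta_s\beta_s''-(\beta_s')^2\big)$ with $\beta_s\beta_s''-(\beta_s')^2=\big(\sum_i p_i\big)\big(\sum_i p_i\ell_i^2\big)-\big(\sum_i p_i\ell_i\big)^2$, where $p_i:=\theta_0^i(\delta^i)^s>0$ and $\ell_i:=\log\delta^i$; by Cauchy–Schwarz this is $\ge 0$, and it vanishes only if all $\ell_i$ coincide, which is excluded, so $\psi''>0$.

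Finally I would assemble the pieces. Convexity of $\psi$ makes $\psi(s+h)-\psi(s)$ non‑decreasing in $s$, hence $\phi$ non‑increasing, hence $\phi(t+\tau)\ge\phi(t'+\tau)$, which is the weak inequality. For the strict case, if $t<t'$ and $\tau<\tau'$ then $h=\tau'-\tau>0$ and $t+\tau<t'+\tau$; strict convexity then makes $\psi(s+h)-\psi(s)$ strictly increasing, so $\phi$ is strictly decreasing on $[0,\infty)$ and the inequality is strict. The only delicate point—hardly an obstacle—is the bookkeeping needed to upgrade convexity to strict convexity, i.e.\ using that $n\ge 2$, the $\delta^i$ are distinct, and $\theta_0^i>0$ (all in force under the paper's standing assumptions, with positivity of the weights preserved along the trajectory by \ref{asm:uprime}–\ref{asm:fprime}); the convexity itself is just the standard fact that a log–moment‑generating function is convex.
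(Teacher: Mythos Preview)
Your proof is correct but takes a genuinely different route from the paper. The paper clears denominators and examines the sign of $\beta_{t+\tau}\beta_{t'+\tau'}-\beta_{t+\tau'}\beta_{t'+\tau}$ directly: expanding the double sums and pairing terms, it obtains
\[
\sum_{i<j}\theta_0^i\theta_0^j(\delta^i\delta^j)^{t+\tau}\big[(\delta^i)^{\Delta t}-(\delta^j)^{\Delta t}\big]\big[(\delta^i)^{\Delta\tau}-(\delta^j)^{\Delta\tau}\big],
\]
each summand manifestly nonnegative (strictly positive when $\Delta t,\Delta\tau>0$) because $\delta^i>\delta^j$ for $i<j$. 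You instead identify the inequality as log-convexity of $s\mapsto\beta_s$ and prove that via H\"older or the variance/Cauchy--Schwarz computation of $\psi''$. Your packaging is more conceptual---it names the phenomenon (convexity of a cumulant-generating function) and would port immediately to continuous time or to more general weight measures---whereas the paper's argument is fully elementary and stays at the level of finite sums. The two are closer than they look: the paper's pairwise expansion is essentially the Lagrange identity underlying your Cauchy--Schwarz step, so at bottom both exploit the same positivity, just unpacked at different levels of abstraction.
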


Given that $\beta_t$ is decreasing in $t$, there is impatience at the aggregate level. In addition, Lemma \ref{lem:betadiff} establishes that collective preferences, as constructed in the previous section, satisfy the property of \emph{diminishing marginal impatience}, in a sense consistent with a discrete-time formulation.

\subsection{Stationarity, time invariance, and time consistency} 

In terms of the three axioms introduced by \cite{halevy15}, it is shown below that the dynamic choice model  developed in Section \ref{sec:collective} yields collective preferences that are both nonstationary and time-dependent. At the same time, and perhaps surprisingly, these preferences satisfy time consistency.

Before presenting the main result of this section, note that discount factors $\beta_t$ (or, equivalently, $\tilde{\beta}_t$) transform period $t$ values into period 0 units. Given the multiplicative nature of discount factors derived from recursive preferences, it is easy to see that for any $t,\tau \geq 0$, discounting from $(t+\tau)$ to $t$ is equivalent to multiplication by the factor $\beta_{t+\tau}/\beta_t$.

\begin{thrm}
\label{thm:3props}
Suppose that a sequence of collective preference relations over $(\hat{x},t)$ is represented by a separable utility function of the form $\tilde{\beta}_t\,\tilde{U}(\hat{x})$, where $\tilde{U}$ and $\tilde{\beta}_t$ are given by \eqref{eq:RPFUhatdef} and  \eqref{eq:betathat}, respectively. Furthermore, assume $\theta_0^i > 0$ for each $i \in N$. Then, collective time  preferences satisfy: 
\begin{enumerate}[align=left,leftmargin=*,itemsep=0pt,topsep=1pt]
\item[$\neg\,$\ref{axm:station}] nonstationarity, 
\item[$\neg\,$\ref{axm:timeinv}] time dependency,
\item[\ \ \ref{axm:timecon}] time consistency.
\end{enumerate}
\end{thrm}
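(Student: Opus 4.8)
The plan is to argue directly from the preference representation. Under the stated hypotheses, from the vantage point of period $r$ a dated reward $(c,s)$ is valued as $(\hat\beta_s/\hat\beta_r)\,\hat U(c)$, since discounting a period-$s$ amount back to period $r$ means multiplying by $\hat\beta_s/\hat\beta_r$; note that $\hat\beta_r>0$ for every $r$ and $\hat\beta_0=1$ from \eqref{eq:betathat}, and $\hat U$ is continuous and strictly increasing. Cancelling the common positive factor $\hat\beta_r$ then gives, for any evaluation date $r$,
\[
(b,s)\sim_r(c,s') \iff \hat\beta_s\,\hat U(b)=\hat\beta_{s'}\,\hat U(c),
\]
a relation in which $r$ has disappeared. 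Time consistency \ref{axm:timecon} is immediate from this: both $(b,t+\tau)\sim_t(c,t+\tau')$ and $(b,t+\tau)\sim_{t'}(c,t+\tau')$ collapse to the single condition $\hat\beta_{t+\tau}\,\hat U(b)=\hat\beta_{t+\tau'}\,\hat U(c)$. This is the structural heart of the theorem: the multiplicative form of the discount factors inherited from the recursive representation makes the evaluation date irrelevant to comparisons among rewards whose own dates are held fixed.

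With \ref{axm:timecon} in hand, time dependency $\neg$\ref{axm:timeinv} follows from nonstationarity $\neg$\ref{axm:station} via the meta-result of \cite{halevy15}: among \ref{axm:station}, \ref{axm:timeinv}, \ref{axm:timecon} any two imply the third, so \ref{axm:timeinv} and \ref{axm:timecon} together would imply \ref{axm:station}; hence, since \ref{axm:timecon} holds, $\neg$\ref{axm:station} forces $\neg$\ref{axm:timeinv}. (Equivalently, the counterexample constructed below violates \ref{axm:timeinv} directly.) It therefore remains only to produce one failure of \ref{axm:station}.

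For $\neg$\ref{axm:station}, the key observation is that $\hat\beta_t^{1/\gamma}=\sum_i\hat\theta_0^i(\hat\delta^i)^t$ has exactly the form of the sequence in \eqref{eq:betatdef}: the weights $\hat\theta_0^i$ are nonnegative, sum to one, and are strictly positive under the hypothesis $\theta_0^i>0$, while $1>\hat\delta^1>\cdots>\hat\delta^n>0$ because $x\mapsto x^{1/\gamma}$ is strictly increasing. Thus Lemma \ref{lem:betadiff} applies to $\{\hat\beta_t^{1/\gamma}\}$, and since $x\mapsto x^\gamma$ is strictly increasing on $\mathbb{R}_+$, raising to the power $\gamma$ preserves the strict inequality: for $0\le t<t'$ and $0\le\tau<\tau'$,
\[
\frac{\hat\beta_{t+\tau}}{\hat\beta_{t+\tau'}}>\frac{\hat\beta_{t'+\tau}}{\hat\beta_{t'+\tau'}},\qquad\text{in particular}\qquad\frac{\hat\beta_{t+\tau'}}{\hat\beta_{t+\tau}}\neq\frac{\hat\beta_{t'+\tau'}}{\hat\beta_{t'+\tau}}.
\]
Now specialize to $t=0$, $\tau=0$, $t'=1$, $\tau'=1$ (the hypotheses $n\ge2$ and $\theta_0^i>0$ make the inequality strict), fix some $c$ in the interior of $X$ with $\hat U(c)\neq0$, and choose $b$ with $\hat U(b)=\hat\beta_1\,\hat U(c)$, so that $b\neq c$ and $(b,0)\sim_0(c,1)$. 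Then $(b,1)\sim_0(c,2)$ would require $\hat\beta_1\,\hat U(b)=\hat\beta_2\,\hat U(c)$, i.e. $\hat U(b)=(\hat\beta_2/\hat\beta_1)\,\hat U(c)$, which is incompatible with $\hat U(b)=\hat\beta_1\,\hat U(c)$ since $\hat\beta_1\neq\hat\beta_2/\hat\beta_1$ by the strict inequality above and $\hat U(c)\neq0$; hence \ref{axm:station} fails, and with it \ref{axm:timeinv}. The only delicate point — and the main obstacle — is the existence of such a pair $b\neq c$: since, depending on the parameters $(\gamma,\phi,\eta,n)$, the range of $\hat U$ can be bounded above and/or confined to the positive reals, one must select $c$ so that $\hat\beta_1\,\hat U(c)$ falls inside that range; this is always possible under \ref{asm:u0}--\ref{asm:uprime}, using $\hat\beta_1\in(0,1)$ together with the continuity and strict monotonicity of $\hat U$, though the verification requires a short check across the admissible LTCF specifications (in which $\hat U(b)/\hat U(c)$ in fact sweeps all of $(0,\infty)$).
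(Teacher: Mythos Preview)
Your proof is correct and follows essentially the same route as the paper: both establish time consistency by cancelling the positive factor $\hat\beta_r$ from the evaluation date, and both derive nonstationarity from Lemma~\ref{lem:betadiff} applied to the sequence $\{\hat\beta_t\}$ (via the order-preserving transformation $x\mapsto x^\gamma$). The only notable differences are organizational: you prove \ref{axm:timecon} first and then invoke Halevy's meta-result to reduce $\neg$\ref{axm:timeinv} to $\neg$\ref{axm:station}, whereas the paper checks \ref{axm:station} and \ref{axm:timeinv} separately and observes they collapse to the same condition; and you construct an explicit indifference pair $(b,c)$ and verify its existence across LTCF specifications, a point the paper leaves implicit.
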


\begin{proof}
First, note that the conclusion of Lemma \ref{lem:betadiff} remains valid if $\beta_t$ is replaced by $\tilde{\beta}_t$. The reason is because substituting $\theta_0^i$ with $\tilde{\theta}_0^i$ and $\delta^i$ with $\tilde{\delta}^i$ in the definition of $\beta_t$ yields $\sum_i\tilde{\theta}_0^i(\tilde{\delta}^i)^t$. It is easy to see from \eqref{eq:betathat} that $\tilde{\beta}_t$ is an order-preserving transformation of $\beta_t$ with respect to $t$. Then, the Lemma also holds for this monotonic transformation. 

Now assume that $b,c \in \hat{X}$, $0 \leq t < t'$ and $0 \leq \tau < \tau'$. From \ref{axm:station}, one of the indifference conditions for stationarity (on the left-hand side) implies that
\begin{equation}
\label{eq:1stcond}
\frac{\tilde{\beta}_{t+\tau}}{\tilde{\beta}_t}\,\tilde{U}(b)=\frac{\tilde{\beta}_{t+\tau'}}{\tilde{\beta}_t}\,\tilde{U}(c).
\end{equation}
Hence, the following equality (on the right-hand side) must hold, 
\begin{equation*}
\frac{\tilde{\beta}_{t+\tau}}{\tilde{\beta}_t}\frac{\tilde{\beta}_{t'+\tau}}{\tilde{\beta}_{t+\tau}}\,\tilde{U}(b)=\frac{\tilde{\beta}_{t+\tau'}}{\tilde{\beta}_t}\frac{\tilde{\beta}_{t'+\tau'}}{\tilde{\beta}_{t+\tau'}}\,\tilde{U}(c),
\end{equation*}
which is equivalent to
\begin{equation}
\label{eq:indiff1}
\frac{\tilde{\beta}_{t+\tau}}{\tilde{\beta}_{t+\tau'}}-\frac{\tilde{\beta}_{t'+\tau}}{\tilde{\beta}_{t'+\tau'}}=0.
\end{equation}

To verify \ref{axm:timeinv}, suppose that \eqref{eq:1stcond} holds. Time invariance requires that, from the perspective of $t'$ preferences, applying the same time delays ($\tau$ and $\tau'$) to each alternative preserves indifference, that is,   
\begin{equation}
\label{eq:indiff2}
\frac{\tilde{\beta}_{t'+\tau}}{\tilde{\beta}_t'}\,\tilde{U}(b)=\frac{\tilde{\beta}_{t'+\tau'}}{\tilde{\beta}_t'}\,\tilde{U}(c).
\end{equation}
Combine \eqref{eq:1stcond} and \eqref{eq:indiff2} to obtain 
\begin{align*}
\frac{\tilde{\beta}_{t+\tau}}{\tilde{\beta}_{t'+\tau}}-\frac{\tilde{\beta}_{t+\tau'}}{\tilde{\beta}_{t'+\tau'}}
=\frac{\tilde{\beta}_{t+\tau'}}{\tilde{\beta}_{t'+\tau}}\left(\frac{\tilde{\beta}_{t+\tau}}{\tilde{\beta}_{t+\tau'}}-\frac{\tilde{\beta}_{t'+\tau}}{\tilde{\beta}_{t'+\tau'}}\right)=0,
\end{align*}
which reduces to \eqref{eq:indiff1}. In other words, the same  condition must be satisfied for \ref{axm:station} and \ref{axm:timeinv} to hold. But Lemma \ref{lem:betadiff} implies that if $t < t'$ and $\tau < \tau'$, then
\begin{equation*}
\frac{\tilde{\beta}_{t+\tau}}{\tilde{\beta}_{t+\tau'}}-\frac{\tilde{\beta}_{t'+\tau}}{\tilde{\beta}_{t'+\tau'}} > 0,
\end{equation*}
which contradicts \eqref{eq:indiff1}. This proves $\neg$\ref{axm:station} and $\neg$\ref{axm:timeinv}.

From \ref{axm:timecon}, time consistency implies that the following two conditions should be satisfied simultaneously,
\begin{equation*}
\frac{\tilde{\beta}_{t+\tau}}{\tilde{\beta}_t}\,\tilde{U}(b)=\frac{\tilde{\beta}_{t+\tau'}}{\tilde{\beta}_t}\,\tilde{U}(c) \quad \text{and} \quad \frac{\tilde{\beta}_{t+\tau}}{\tilde{\beta}_{t'}}\,\tilde{U}(b)=\frac{\tilde{\beta}_{t+\tau'}}{\tilde{\beta}_{t'}}\,\tilde{U}(c).
\end{equation*}
Since $\tilde{\beta}_t$ and $\tilde{\beta}_{t'}$ are strictly positive by hypothesis, it immediately follows that collective  preferences are time consistent and the proof is complete.
\end{proof}

Note that the proof of Theorem \ref{thm:3props} captures an interesting  aspect of the analysis carried out by  \cite{halevy15}. Given that any two properties imply the third, if one property is satisfied, say, time consistency, and another one is not, e.g., stationarity, the third property must not be satisfied. This is the reason why the conditions for stationarity and time invariance are identical.

\subsection{Heterogeneous discounting with constant Pareto  weights}

To close this section, the results obtained from the current framework will be compared with those of  \cite{jacksonyariv14,jacksonyariv15} to offer some insights into their results and evaluate possible extensions to this paper. The authors show that under heterogeneous discounting, collective preferences that satisfy Pareto optimality must be either time inconsistent or dictatorial. But they assume constant utility weights and a common consumption stream that is exogenous. Consequently, key instruments to resolve intertemporal conflicts are missing. 

For simplicity, suppose that $u$ is bounded on $\mathbb{R}_+$. Let $\bar{\theta} \in \Theta^n$ be a vector of constant (time homogeneous) weights, and add to \eqref{eq:PP} the restriction $\theta_t^i = \theta_{t+1}^i = \bar{\theta}^i$, for all $i$ and for all $t$. Let $J(\cdot,\bar{\theta})$ denote  the corresponding value function. Then, the right-hand side of \eqref{eq:longLagr} is replaced by
\begin{align*}
\adjustlimits\sup_{(x,y,z,\lambda)\in\Phi\,}\inf_{\mu \geq 0}\Big\{\textstyle\sum_i \bar{\theta}^i\left[u_i(x^i)-\lambda\textstyle\sum_i x^i\right] + \lambda[f(k)-y]+\textstyle\sum_i\bar{\theta}^i\left(\delta^i -\mu\right)z^i + \mu J(y,\bar{\theta})\Big\}.
\end{align*}
Given that the choice of $\hat{x}=\sum_i x^i$ can still be separated from the investment decision and the resource restriction holds with equality, following similar arguments as in the proof of Theorem \ref{thm:RFBellman}, the above problem is reduced to
\begin{align}
\label{eq:J0}
\sup_{z \in \mathcal{U}}\adjustlimits\sup_{y \in \Gamma(k)\,}\inf_{\mu \geq 0}\Big\{U(f(k)-y,\bar{\theta})+\left[\textstyle\sum_i\bar{\theta}^i\delta^i z^i - \mu\textstyle\sum_i \bar{\theta}^i z^i\right] + \mu J(y,\bar{\theta})\Big\}.
\end{align}
By hypothesis, each $z^i$ must be chosen from some interval $Z^i:=[0,z_m^i]$ with $z_m^i > 0$. 

Consider, for instance, an ``egalitarian solution'' to \eqref{eq:J0} in which $z^i = z > 0$ and $z \in \bigcap_i Z^i$ for all $i \in N$. Hence, $J(y,\bar{\theta})=z$. This implies that
\begin{align*}
J(k,\bar{\theta})=\sup_{y \in \Gamma(k)}\Big\{U(f(k)-y,\bar{\theta}) + \bar{\delta} J(y,\bar{\theta})\Big\},
\end{align*}
where $\bar{\delta}:=(\textstyle\sum_i\bar{\theta}^i \delta^i)$, so the aggregate discount factor is the weighted average of all individual discount factors. This formulation can be interpreted as the dynamic program of a fictional ``representative agent'' with an average discount factor and time consistent preferences. But these preferences will not satisfy unanimity, unless all the $\bar{\theta}^i$ are identical (i.e., equal to $(1/n)$ for each $i$).

Another possibility is to consider an ``efficient solution'' that consists in setting $\mu$ so that $\bar{\theta}^i(\delta^i-\mu) =0$, the first-order condition from differentiating the objective with respect to each $z^i$. However, this condition may be satisfied only \emph{for a single} $i \in N$. Which value of $\delta^i$ should be chosen? Suppose that $\mu=\delta^1$. It follows that $z^j =0$ for all $j > 1$, hence the problem reduces to
\begin{align}
\label{eq:J1}
J(k,\bar{\theta})=\sup_{y \in \Gamma(k)}\Big\{U(f(k)-y,\bar{\theta}) + \delta^1 J(y,\bar{\theta})\Big\}.
\end{align}
But this is equivalent to finding the optimum for $i=1$ and leaving the other agents with zero utility, thus collective preferences will be dictatorial. Assume that $\mu=\delta^i$ for some $i$ in $N\backslash\{1\}$. In that case, $z_k =0$ for all $k > i$, then the problem has the form  
\begin{align}
\label{eq:J2}
\adjustlimits\sup_{y \in \Gamma(k)\,}\sup_{(z^1,\ldots,z^i)\in  Z^1 \times \cdots \times Z^i}\Big\{U(f(k)-y,\bar{\theta})+\sum_{j < i} \bar{\theta}^j(\delta^j -\delta^i)z^j + \delta^i J(y,\bar{\theta})\Big\}.
\end{align}
Note that the marginal contribution of each agent $1 \leq j < i$ to the aggregate continuation value is precisely $\bar{\theta}^j(\delta^j-\delta^i)$, hence the choice of $(z^1,\ldots,z^i)$ can be formulated as a linear programming problem, i.e.,
\begin{equation}
\label{eq:J3}
\sup_{(z^1,\ldots,z^i) \in  Z^1 \times \cdots \times Z^i}\ \sum_{j < i} \bar{\theta}^j(\delta^j -\delta^i)z^j.
\end{equation}
If the solution lies on a vertex of the convex polytope that describes the feasible region, at most two agents get nonzero utility. If it lies on an edge or a face of the polytope, two or more agents obtain positive continuation utilities, but they are linear combinations of each other. In any case, the solution of this auxiliary problem will be dictatorial. Let $\check{z}^j \in Z^j$, $1 \leq j < i$, denote any solution to \eqref{eq:J3}. Then, \eqref{eq:J2} reduces to
\begin{align}
\label{eq:J4}
\sup_{y \in \Gamma(k)} \Big[U(f(k)-y,\bar{\theta})+\delta^i J(y,\bar{\theta})\Big] + \sum_{j < i} \bar{\theta}^j\delta^j \check{z}^j,
\end{align}
so the choice of $y$ at the margin is determined by $\delta^i$, whereas each of the $\bar{\theta}^j \delta^j \check{z}^j$ terms  works effectively as a nonnegative utility transfer to those agents with a higher level of patience than $i$. 

It is easy to show that at any period $t$, an optimal sharing rule $\tilde{x}_t^j$ must satisfy
\begin{align}
\label{eq:J5}
\left(\frac{\delta^i}{\delta^j}\right)^t =\frac{\bar{\theta}^j u_j'(\tilde{x}_t^j)}{\bar{\theta}^i u_i'(\tilde{x}_t^i)}, & & 1 \leq j < i;\quad t=0,1,\ldots
\end{align}
Hence as $t \to +\infty$, the ratio of marginal utilities $u_j'(\tilde{x}_t^j)/u_i'(\tilde{x}_t^i) \to 0$, which implies that  the consumption of each agent $j$ should grow faster than the consumption of $i$ in equilibrium. Note that \eqref{eq:J5} also  implies
\begin{align*}
\frac{u_i'(\tilde{x}_t^i)}{\delta^i u_i'(\tilde{x}_{t+1}^i)} =\frac{u_j'(\tilde{x}_t^j)}{\delta^j u_j'(\tilde{x}_{t+1}^j)}, & & 1 \leq j < i;\quad  t=0,1,\ldots
\end{align*}
At the same time, the evolution of aggregate variables is determined by the following Euler equation 
\begin{align*}
\tilde{U}'(\hat{x}_t)=\delta^i \,\tilde{U}'(\hat{x}_{t+1})\,f'(\bar{y}_t), & & t=0,1,\ldots,
\end{align*}
where $\bar{y}_t \in \Gamma(k_t)$ solves \eqref{eq:J4}, and the aggregate resource restriction is satisfied, then
\begin{align*}
\hat{x}_t = \sum_{j < i} \hat{x}_t^j + \hat{x}_t^i = f(k_t) - \bar{y}_t, & & t=0,1,\ldots
\end{align*}
But a program with such characteristics is clearly time inconsistent. 

To summarize, if the Pareto weights are restricted to be constant over time and the consumption-savings choice is endogenous, an optimal solution that is interior for all agents may be difficult (or even impossible) to achieve. In addition, the problems of time inconsistency and dictatorial preferences become interrelated and more complex in nature.

\section{Concluding remarks}
\label{sec:conclude}

It is not surprising that some strong assumptions, especially  time-additive preferences and interior optimal sharing rules, are needed to obtain the main results of this paper. Extending this framework to a broader class of intertemporal preferences is a natural direction for future work. It is also easily verified that, even within the ATCF class of utility functions, there exist conditions for which a null share for some agents may be optimal in some periods. Relaxing the assumption of an interior sharing rule would give rise to changes in the \emph{composition} of the group over time and richer dynamics could occur. In that case, the set of agents with a positive share would be included as a state variable of the system. This seems to be a challenging but promising direction for future research. 

\appendix

\section{Proofs}

\subsection{Proof of Theorem \ref{thm:RFBellman}}
\label{app:pfRFBellman}

Fix $k \in (0,k_m)$ and $\theta \in \Theta^n$ with $\theta^i > 0$, for all $i \in N$. Let $\lambda$ and $\mu$ be  nonnegative Lagrange multipliers associated to the inequality restrictions in \eqref{eq:PP}, and form the Lagrangean 
\begin{align}
\label{eq:longLagr}
\mathscr{L}(x,y,z,\tau,\lambda,\mu|\,k,\theta)=\textstyle\sum_i \theta^i[u_i(x^i) + \delta^i z^i]&+\lambda[f(k)-\textstyle\sum_i x^i-y]\\[3pt]
&-\mu[\textstyle\sum_i\tau^i z^i-V(y,\tau)].\nonumber
\end{align}
Denote by $X \subset \mathbb{R}_+^n$ the space of $x$ allocations, and let $\Phi := X \times \mathbb{R}_+\!\times \mathcal{U} \times \mathbb{R}_+$ and $\Upsilon:=\Theta^n \times \mathbb{R}_+$. The proof is divided into several steps. 

\begin{enumerate}[label=\textbf{Step  \arabic*.},wide=0\parindent]
\item Note that Proposition \ref{prp:Vfunct} implies that \eqref{eq:PP} is a saddle-point problem with a concave-convex objective function, defined on products of convex sets, that satisfies \emph{strong duality}.\footnote{For a recent treatment on strong duality in convex optimization problems, see \cite{bertsekasetal03}.} Hence 
\begin{equation}
\label{eq:strongdual}
V(k,\theta)=\adjustlimits\sup_{(x,y,z,\lambda) \in \Phi\,}\inf_{(\tau,\mu) \in \Upsilon}\ \mathscr{L}(x,y,z,\tau,\lambda,\mu|\,k,\theta),
\end{equation}
and the $\sup$ and $\inf$ above can be interchanged. By Lemma 1 in \cite{kamihigashi08}, the supremum over two variables can be split into two suprema. Then, the right-hand side of \eqref{eq:strongdual} is equivalent to:
\begin{align}
\label{eq:supdec1}
\sup_{x \in X\,}\adjustlimits\sup_{(y,z,\lambda) \in \Phi_{x}\,}\inf_{(\tau,\mu) \in \Upsilon} \big\{\left[\textstyle\sum_i \theta^i u_i(x^i) +\lambda(\hat{x}-\textstyle\sum_i x^i)\right] +\lambda[f(k)-\hat{x}-y]\big.\\
\big. + \textstyle\sum_i(\theta^i\delta^i-\mu\tau^i) z^i  + \mu V(y,\tau)\big\},\nonumber
\end{align}
where $\Phi_{x}:=  \{(y,z,\lambda) \in \mathbb{R}_+\!\times \mathcal{U} \times \mathbb{R}_+ : (x,y,z,\lambda) \in \Phi\}$.

\item Consider the following auxiliary program
\begin{equation*}
\sup_{x \in X}\ \left[\textstyle\sum_i \theta^i u_i(x^i) +\lambda\left(\hat{x}-\textstyle\sum_i x^i\right)\right],
\end{equation*}
for some $0 < \hat{x} \leq f(k)$ given. Since this problem is convex and differentiable, and the solution is interior for each $i$, Karush-Kuhn-Tucker (KKT) conditions are necessary and sufficient for optimality. Then, for each $i \in N$ there exists a map $\sigma^i: [0,1]\times \mathbb{R}_+\! \to \mathbb{R}_+$, such that 
\begin{align}
\label{eq:mguit}
\theta^i u_i'\left(\sigma^i(\theta^i,\lambda)\right)=\lambda,
\end{align}
and $\sum_i \sigma^i(\theta^i,\lambda) \leq \hat{x}$. The value of this auxiliary program is given by
\begin{equation}
\label{eq:Qdef}
Q(\lambda,\hat{x},\theta):=\textstyle\sum_i \theta^i
u_i\left(\sigma^i(\theta_i,\lambda)\right)+\lambda\left[\hat{x}-\textstyle\sum_i \sigma^i(\theta^i,\lambda)\right].
\end{equation} 
Next, replace $\sup_{x \in X}\left[\textstyle\sum_i \theta^i u_i(x^i)+\lambda(\hat{x}-\textstyle\sum_i x^i)\right]$ with $Q(\lambda,\hat{x},\theta)$ in \eqref{eq:supdec1}, and split the sup again, which implies that \eqref{eq:supdec1} is equivalent to
\begin{align*}
\sup_{z \in \mathcal{U}\,}\adjustlimits\sup_{(y,\lambda) \in \Phi_{x,z}}\inf_{(\tau,\mu) \in \Upsilon} \left\{Q(\lambda,\hat{x},\theta)+\lambda[f(k)\!-\!\hat{x}\!-\!y] + \left[\textstyle\sum_i(\theta^i\delta^i- \mu\tau^i)z^i\right] + \mu V(y,\tau)\right\},
\end{align*}
where $\Phi_{x,z}:=  \left\{(y,\lambda) \in \mathbb{R}_+\!\times\mathbb{R}_+: (y,z,\lambda) \in \Phi_x \right\}$.

\item Now, solve the following auxiliary program for $0 \leq y < f(k)$ given,
\begin{equation}
\label{eq:supdec2}
\adjustlimits\sup_{z \in \mathcal{U}\,}\inf_{(\tau,\mu) \in \Upsilon} \left[\textstyle\sum_i(\theta^i\delta^i- \mu\tau^i)z^i+\mu V(y,\tau)\right].
\end{equation}
If $\mu=0$, then each $i$ should get the sup of $z^i$ on $\mathcal{U}$, but this is infeasible. Assume $\mu > 0$, which implies $\sum_i \tau^i z^i = V(y,\tau)$ by complementary slackness. Differentiating the objective in \eqref{eq:supdec2} with respect to $\tau^i$ yields $z^i = \partial{V}/\partial{\tau^i}$, $i=1,\ldots,n$. 
On the other hand, given that $0 < \theta^i < 1$, optimality requires that $\tau^i$ satisfies 
\begin{align}
\label{eq:thetarect}
\theta^i\delta^i = \mu \tau^i,  
\end{align}
for each $i \in N$. This in turn implies $0 < \tau^i <1$ for all $i$, so adding up \eqref{eq:thetarect} over $i$ yields 
\begin{align*}
\mu=\sum_{i=1}^n \theta^i\delta^i,
\end{align*}
which is \eqref{eq:mudef}. Slightly abusing notation, call this map $\mu:\Theta^n \to \mathbb{R}_+$. Since \eqref{eq:thetarect} defines a transition map $F:\Theta^n \to \Theta^n$ for the utility weights, then $\tau = F(\theta)$ at an optimum. Taking all this into consideration, the right-hand side of \eqref{eq:strongdual} can be further simplified and written as
\begin{align}
\label{eq:Bellmanint}
\sup_{(y,\lambda) \in \Phi_{x,z}} \left\{Q(\lambda,\hat{x},\theta)+\lambda[f(k)-\hat{x}-y] + \mu(\theta)V(y,F(\theta))\right\}.
\end{align}

\item By \eqref{eq:mguit}, individual allocations depend only on $(\theta,\lambda)$, so an optimal choice of $x$ can be formulated in terms of aggregate consumption $\hat{x}$, instead of $\lambda$. Given that each $u_i$ is strictly increasing and $\lambda > 0$ holds at the optimum, set $\hat{x}=\sum_i \sigma^i(\theta^i,\lambda)$ in \eqref{eq:Qdef} and use this expression to implicitly define a function $\lambda: \hat{X}\!\times\Theta^n \to \mathbb{R}_+$. This, in turn, allows to define an optimal consumption profile as a map $s:\hat{X} \times \Theta^n \to X$ by setting $s^i(\hat{x},\theta):=\sigma^i(\theta^i,\lambda(\hat{x},\theta))$. Given that the value of \eqref{eq:Qdef} becomes
\begin{equation*}
Q(\lambda(\hat{x},\theta),\hat{x},\theta)=\sum_{i=1}^n \theta^i u_i\left(\sigma^i(\theta^i,\lambda(\hat{x},\theta))\right),
\end{equation*}
it is possible to define a map $U : \hat{X} \times \Theta^n \to \underline{\mathbb{R}}$ by substituting $\sigma^i$ with $s^i$ on the right-hand side of the above expression and setting $U(\hat{x},\theta):=Q(\lambda(\hat{x},\theta),\hat{x},\theta)$. This gives \eqref{eq:Udef}.

\item Finally, substituting $U(\hat{x},\theta)$ into \eqref{eq:Bellmanint} yields
\begin{align*}
V(k,\theta)= &\sup_{(\hat{x},y) \gg 0}\ \Big[U(\hat{x},\theta)+\mu(\theta)V(y,F(\theta)):\hat{x} + y \leq f(k) \Big],
\end{align*}
that is equivalent to \eqref{eq:strongdual}. It is readily verified that $U(\hat{x},\theta)$ is strictly increasing in $\hat{x}$ for all $\theta$. To see this, implicitly differentiate $\sigma^i$ with respect to $\lambda$ in \eqref{eq:mguit} to obtain $\partial{\sigma^i}/\partial{\lambda}=(\theta_i u_i''(\sigma^i))^{-1} < 0$, for each $i$. It also follows from implicit differentiation that $\partial{\lambda}/\partial{\hat{x}}=\sum_i \partial{\sigma^i}/\partial{\lambda} < 0$. Therefore,
\begin{equation*}
\frac{\partial{U}}{\partial{\hat{x}}}= \sum_{i=1}^n \theta^i u_i'(\sigma^i)\frac{\partial{\sigma^i}}{\partial{\lambda}}\frac{\partial{\lambda}}{\partial{\hat{x}}}=\sum_{i=1}^n  \lambda\,\frac{\partial{\sigma^i}}{\partial{\lambda}}\frac{\partial{\lambda}}{\partial{\hat{x}}} > 0.
\end{equation*}
This, together with the strict monotonicity of $V$ with respect to its first argument implies that the resource  restriction, $\hat{x} \leq f(k)-y$, holds with equality, which gives \eqref{eq:RFBellman}. 
\end{enumerate}

Since $(k,\theta)$ was chosen arbitrarily in the interior of $K \!\times\Theta^n$, all the above results can be extended to that set. This completes the proof. \qed

\subsection{Proof of Proposition \ref{prp:Uchar}}

The proofs for parts \ref{prp:Umonccv}, \ref{prp:Udiffb}, and \ref{prp:sdiffb} are omitted. The interested reader is referred to the extensive work of \cite{mascolell89} for details. In order to simplify the proofs for the remaining results, it will be assumed that the  differentiability of $U$ and $s$ has already been established. 

Fix $\hat{x} \in (0,+\infty)$ and suppose that $\theta^i >0$ holds for all $i \in N$. Differentiating \eqref{eq:resrecs}  with respect to $\theta^i$ yields
\begin{align}
\label{eq:partsi}
\sum_{j=1}^n \frac{\partial{s^j(\hat{x},\theta)}}{\partial{\theta^i}}=0. 
\end{align}
Next, differentiate $U(\hat{x},\theta)=\sum_i \theta^i u_i(s^i(\hat{x},\theta))$ with respect to $\theta^i$, so that
\begin{align*}
\frac{\partial{U(\hat{x},\theta)}}{\partial{\theta^i}}
&=u_i(s^i(\hat{x},\theta))+ \sum_{j=1}^n \theta^j u_j'(s^j(\hat{x},\theta))\frac{\partial{s^j(\hat{x},\theta)}}{\partial{\theta^i}}\\
&=u_i(s^i(\hat{x},\theta))+ \lambda(\hat{x},\theta)\sum_{j=1}^n \frac{\partial{s^j(\hat{x},\theta)}}{\partial{\theta^i}},
\end{align*}
where the second equality uses the fact that $\theta^i u_i'(s^i(\hat{x},\theta))=\lambda(\hat{x},\theta)$, from the KKT optimality conditions. By \eqref{eq:partsi}, the above equality reduces to
\begin{align}
\label{eq:partUtheta}
\frac{\partial{U(\hat{x},\theta)}}{\partial{\theta^i}}
= u_i(s^i(\hat{x},\theta)), \qquad \text{for all}\ i \in N,
\end{align}
which in turn implies that
\begin{align*}
\sum_{i=1}^n \theta^i\frac{\partial{U(\hat{x},\theta)}}{\partial{\theta^i}} = \sum_{i=1}^n \theta^i u_i(s^i(\hat{x},\theta))=U(\hat{x},\theta),
\end{align*}
hence, for every $\hat{x} \in \hat{X}$, the aggregate utility function $U(\hat{x},\theta)$ is homogeneous of degree one in $\theta \in \Theta^n$ by Euler's theorem for homogeneous functions. This proves \ref{prp:Uhomog}.

For part \ref{prp:shomog}, since $s^i$ is nonnegative for each $i \in N$, it is clear from \eqref{eq:resrecs} that $\hat{x}=0$ must imply $s^i(0,\theta)=0$, for all $\theta \in \Theta^n$. To prove that $s^i(\cdot,\theta)$ is homogeneous of degree zero, by a corollary from Euler's theorem for homogeneous functions, it is straightforward to show that if $U(\cdot,\theta)$ is homogeneous of degree one, its partial derivatives are homogeneous of degree zero. Differentiating \eqref{eq:partUtheta} with respect to $\theta^j$, $j=1,\ldots,n$, gives
\begin{align*}
 \frac{\partial}{\partial{\theta^j}}\left[\frac{\partial{U(\hat{x},\theta)}}{\partial{\theta^i}}\right]=u_i'(s^i(\hat{x},\theta))\frac{\partial{s^i(\hat{x},\theta)}}{\partial{\theta^j}},
\end{align*}
hence
\begin{align*}
\sum_{j=1}^n \theta^j \frac{\partial}{\partial{\theta^j}}\left[\frac{\partial{U(\hat{x},\theta)}}{\partial{\theta^i}}\right]
=\frac{\lambda(\hat{x},\theta)}{\theta^i}\left[\sum_{j=1}^n \theta^j \frac{\partial{s^i(\hat{x},\theta)}}{\partial{\theta^j}}\right]=0.
\end{align*}
But $\lambda(\hat{x},\theta)>0$ in any interior equilibrium and $\theta^i > 0$ by hypothesis, so the sum in square brackets in the second equality must vanish, yielding the desired result. \qed

\subsection{Proof of Lemma \ref{lem:betadiff}}

Let $\Delta{t}$, $\Delta\tau$ be nonnegative integers such that $t'=t+\Delta{t}$ and $\tau'=\tau+\Delta\tau$. Then, the sign of 
$\beta_{t+\tau}/\beta_{t+\tau'}-\beta_{t'+\tau}/\beta_{t'+\tau'}$
is the same as 
\begin{align*}
\left[\sum_{i=1}^n \theta_0^i(\delta^i)^{t+\tau}\right]\left[\sum_{j=1}^n \theta_0^j(\delta^j)^{t+\Delta{t}+\tau+\Delta{\tau}}\right]-\left[\sum_{i=1}^n \theta_0^i(\delta^i)^{t+\tau+\Delta{\tau}}\right]\left[\sum_{j=1}^n \theta_0^j(\delta^j)^{t+\Delta{t}+\tau}\right].
\end{align*}
After expanding both products in this expression, it is easy to see that all factors with $i=j$ cancel each other out. Collecting the remaining terms yields
\begin{align*}
\sum_{\substack{i,j=1\\[1pt] i < j}}^n \left[\theta_0^i(\delta^i)^{t+\tau}\right]\left[\theta_0^j(\delta^j)^{t+\tau}\right]\left[(\delta^i)^{\Delta{t}}-(\delta^j)^{\Delta{t}}\right]\left[(\delta^i)^{\Delta\tau}-(\delta^j)^{\Delta\tau}\right].
\end{align*}
Note that if either $\Delta{t}$ or $\Delta{\tau}$ is zero, the above sum vanishes. Otherwise, provided that $\Delta{t} > 0$ and $\Delta{\tau} > 0$, it follows from hypothesis that $\delta^i - \delta^j > 0$ for at least one pair $i,j$ such that $i > j$. Hence the sum must be strictly positive. This completes the proof. \qed

\bibliographystyle{chicago}
\bibliography{TCCP}

\end{document}